\numberwithin{equation}{section}
\declaretheoremstyle[
  bodyfont=\normalfont\itshape,
  headformat=\NAME\ \NUMBER\NOTE,
]{myplain}
\declaretheoremstyle[
  headformat=\NAME\ \NUMBER\NOTE,
]{mydefinition}
\newcommand{\envqed}{{\lower-0.3ex\hbox{$\triangleleft$}}}
\declaretheorem[style=myplain,numberwithin=section]{theorem}
\declaretheorem[style=mydefinition,numberlike=theorem,qed=\envqed]{definition}
\declaretheorem[style=mydefinition,numberlike=theorem,qed=\envqed]{remark}
\let\epsilon\varepsilon
\let\phi\varphi
\let\rho\varrho
\renewcommand{\i}{\mathrm{i}}
\providecommand\R{}
\renewcommand{\R}{\mathbb{R}}
\newcommand{\I}{\operatorname{I}}
\renewcommand{\vec}[1]{\pmb{#1}}
\newcommand{\tL}{\vec{t}_L}
\newcommand{\tR}{\vec{t}_R}
\newcommand{\dL}{\vec{d}_L}
\newcommand{\dR}{\vec{d}_R}
\newcommand{\mass}{\mathcal{M}}
\newcommand{\energy}{\mathcal{E}}
\newcommand{\orcid}[1]{ORCID:~\href{https://orcid.org/#1}{#1}}
\newenvironment{keywords}{\par\textbf{Key words.}}{\par}
\newenvironment{AMS}{\par\textbf{AMS subject classification.}}{\par}
\title{High-order mass- and energy-conserving methods for the nonlinear Schr{\"o}dinger equation and its hyperbolization}
\author[1]{Hendrik~Ranocha\thanks{\orcid{0000-0002-3456-2277}}}
\affil[1]{Institute of Mathematics, Johannes Gutenberg University Mainz, Staudingerweg 9, 55128 Mainz, Germany}
\author[2]{David~I.~Ketcheson\thanks{\orcid{0000-0002-1212-126X}}}
\affil[2]{King Abdullah University of Science and Technology (KAUST),
Computer Electrical and Mathematical Science and Engineering Division (CEMSE),
Thuwal, 23955-6900, Saudi Arabia}
\date{October 16, 2025} 
\begin{document}

\maketitle

\begin{abstract}
We propose a class of numerical methods for the nonlinear Schrödinger (NLS)
equation that conserves mass and energy, is of arbitrarily high-order accuracy
in space and time, and requires only the solution of a scalar algebraic
equation per time step.  We show that some existing spatial discretizations,
including the popular Fourier spectral method, are in fact
energy-conserving if one considers the appropriate form of the energy density.
We develop a new relaxation-type approach
for conserving multiple nonlinear functionals that is more efficient and robust
for the NLS equation compared to the existing multiple-relaxation approach.
The accuracy and efficiency of the new schemes is demonstrated on test problems
for both the focusing and defocusing NLS.

\end{abstract}

\begin{keywords}
  additive Runge-Kutta methods,
  nonlinear Schr{\"o}dinger equation,
  Gross-Pitaevskii equation,
  summation-by-parts operators,
  finite difference methods,
  Fourier collocation methods,
  structure-preserving methods
\end{keywords}

\begin{AMS}
  65M12,  
  65M70,  
  65M06,  
  65M60,  
  65M20,  
\end{AMS}

\section{Introduction}

We study discretizations of the nonlinear Schr{\"o}dinger (NLS) equation (also referred to as Gross-Pitaevskii equation)
\begin{equation}
\label{eq:nls-u}
  \i u_t + u_{xx} + \beta |u|^2 u = 0
\end{equation}
with periodic or homogeneous Neumann boundary conditions and given
initial data $u(0, x) = u^0(x)$, where $\beta$ is a real parameter.
The case $\beta > 0$ is called focusing, while $\beta < 0$ is defocusing.
The NLS equation arises in important physical applications, including
deep water waves, Bose-Einstein condensates, and others.  It is one of the most
widely-studied examples of an integrable nonlinear PDE, and possesses a countably-infinite
set of conserved functionals. Two of the most significant are the mass $\mass$ and energy $\energy$:
\begin{equation} \label{conserved-quantities}
  \mass := \int |u|^2 \dif x, \qquad
  \energy := \int \left(|u_x|^2 - \frac{\beta}{4}|u|^4\right) \dif x.
\end{equation}
For brevity, we will call a numerical scheme doubly-conservative if it conserves
discrete analogs of both of the above quantities.
Numerical conservation is extremely important
in obtaining accurate solutions of \eqref{eq:nls-u}.
This is especially true when dealing with higher-degree nonlinearities or in higher dimensions,
where it is possible for solutions to exhibit finite-time blowup, which depends
critically on the value of the energy.  In such cases, numerical conservation of
the mass and energy is essential in order to obtain even qualitatively
correct solutions.  However, even conservative methods can struggle to
accurately capture some solutions, such as multi-soliton bound states
that exhibit large derivatives \cite{herbst1985numerical}.
Thus, the ideal discretization would conserve the mass and energy and be
high-order accurate while also being computationally efficient.

The earliest proposed doubly-conservative schemes were
second-order accurate and fully implicit,
requiring the solution of large systems of nonlinear algebraic
equations \cite{delfour1981finite,sanz1984methods}. These methods have been
analyzed later when combined with finite element schemes in space,
e.g., \cite{akrivis1991fully,henning2017crank}.
Sanz-Serna and Manoranjan proposed an explicit scheme that used a modified time
step size to ensure fully-discrete conservation of the mass only
\cite{sanz1983method}; that scheme can be seen as a precursor to the time relaxation
approaches that have been developed more recently.
Besse \cite{besse2004relaxation} proposed the first doubly-conservative scheme that
does not require any nonlinear algebraic solve, as it is only linearly
implicit; however, it is limited to second-order accuracy in time.
The scalar auxiliary variable (SAV) approach has been used to conserve the mass and a modified energy with fully implicit methods \cite{cui2021mass}.
Bai et al.\ \cite{bai2024high} provided a family of conservative
discretizations of arbitrarily high order in space and time, which require the
use of fully-implicit time stepping.
Akrivis et al.\ \cite{akrivis2025high} propose a remarkable fully implicit class of schemes
that conserve not only mass and energy, but also momentum.
Biswas et al.\ \cite{biswas2024accurate} proposed an approach that couples
existing mass- and momentum-conservative second-order spatial schemes with
time relaxation in order to achieve fully-discrete conservation without the
need to solve any large (linear or nonlinear) algebraic systems.
If conservation is not required, then standard high-order time discretizations
can be combined with spectral methods in space to obtain
highly accurate schemes \cite{antoine2016high}.

Among the existing approaches, we see that no scheme combines all
of the following properties:
\begin{itemize}
    \item Mass and energy conservation;
    \item Arbitrarily high order in space and time;
    \item No need to solve large algebraic systems of equations.
\end{itemize}
In the present work we propose such a class of methods for the NLS equation \eqref{eq:nls-u}
in the presence of periodic or
homogeneous Neumann boundary conditions.  Furthermore, we
provide new insight into some existing schemes that fit within
the framework of summation-by-parts (SBP) spatial discretizations.  In fact, we show
that a broad class of schemes including the popular Fourier spectral discretization,
previously thought to be only mass-preserving,
are in fact also energy-preserving if one considers the appropriate form
for the energy.

We formulate the mass- and energy-conserving spatial semidiscretizations
using SBP operators in Section~\ref{sec:spatial_semidiscretizations}.
References and introductions to SBP operators
are given in \cite{svard2014review,fernandez2014review}.
SBP operators can be used to formulate finite differences (FDs)
\cite{kreiss1974finite,strand1994summation,carpenter1994time},
finite volumes \cite{nordstrom2001finite},
continuous Galerkin (CG) finite elements \cite{hicken2016multidimensional,hicken2020entropy,abgrall2020analysisI},
discontinuous Galerkin~(DG) methods \cite{gassner2013skew,carpenter2014entropy},
flux reconstruction~(FR) \cite{huynh2007flux,vincent2011newclass,ranocha2016summation},
active flux methods \cite{eymann2011active,barsukow2025stability},
as well as meshless schemes \cite{hicken2024constructing}.

To construct high-order accurate time integrators that conserve mass and energy
in Section~\ref{sec:time_discretizations},
we extend the recently developed framework of relaxation-in-time methods
\cite{ketcheson2019relaxation,ranocha2020relaxation,ranocha2020general}.
The basis for these methods goes back to \cite{sanz1982explicit} and \cite[pp.~265--266]{dekker1984stability}.
It has been used for various time integration schemes, e.g.,
Runge-Kutta methods \cite{ranocha2020relaxation},
linear multistep methods \cite{ranocha2020general},
residual distribution schemes \cite{abgrall2022relaxation},
IMEX methods \cite{kang2022entropy,li2022implicit},
and multi-derivative methods \cite{ranocha2023functional,ranocha2024multiderivative}.
Applications include among others Hamiltonian problems \cite{ranocha2020relaxationHamiltonian,zhang2020highly,li2023relaxation},
kinetic equations \cite{leibner2021new},
compressible flows \cite{yan2020entropy,ranocha2020fully,doehring2025paired},
and dispersive wave equations \cite{li2025time,ranocha2025structure,lampert2024structure,mitsotakis2021conservative}.

In Section~\ref{sec:performance_comparison}, we compare the performance of
our proposed approach to other methods from the literature,
demonstrating that it is faster in all cases tested.
In Section~\ref{sec:hyperbolization}, we develop similar semidiscretizations
for the hyperbolized NLS equation \cite{biswas2025hyperbolic} and combine them
with the new time integrators.
Finally, we summarize and discuss our findings in Section~\ref{sec:summary}.

We have implemented all methods in Julia \cite{bezanson2017julia} using
SummationByPartsOperators.jl \cite{ranocha2021sbp} for the spatial
semidiscretizations, wrapping FFTW.jl \cite{frigo2005design} for the
Fourier collocation methods. To solve sparse linear systems, we use
UMFPACK from SuiteSparse \cite{davis2004umfpack,amestoy2004amd,davis2004colamd}
wrapped in the Julia standard library.
To solve the nonlinear relaxation equation, we use the method of
\cite{klement2014using} implemented in SimpleNonlinearSolve.jl
\cite{pal2024nonlinearsolve}.
We use CairoMakie.jl \cite{danisch2021makie} to visualize the results.
All code and data required to reproduce the numerical results is
available online \cite{ranocha2025highRepro}.

\section{Spatial semidiscretizations}
\label{sec:spatial_semidiscretizations}

We start by recalling two well-known spatial discretizations with
conservation properties.  As we will see, both of these fit into a broader framework
of SBP discretizations, which we then introduce and discuss more generally.
The main theoretical contribution of this section is the arbitrarily
high-order accurate mass- and energy-conserving semidiscretizations
introduced in Section~\ref{sec:semidiscretization}.

Separating the real and imaginary parts of $u = v + \i w$, we can rewrite
\eqref{eq:nls-u} as
\begin{equation}
\label{eq:nls-vw}
\begin{aligned}
  v_t + w_{xx} + \beta \bigl( v^2 + w^2 \bigr) w &= 0,
  \\
  w_t - v_{xx} - \beta \bigl( v^2 + w^2 \bigr) v &= 0.
\end{aligned}
\end{equation}

Given a spatial interval $[x_L, x_R]$, we consider (possibly non-uniform)
grids $\vec{x} = (x_1, \dots, x_N)^T$ with
$x_L = x_1 \le \dots$ and $x_N = x_R$ for non-periodic
boundary conditions; for periodic boundaries, $x_N$ may not coincide
with $x_R$.
We specifically allow $x_i = x_{i+1}$ for some $i$ to
formulate discontinuous Galerkin methods naturally as SBP methods.
Furthermore, we consider collocation methods, i.e., the numerical solution
is represented by point values $\vec{u} = (u_1, \dots, u_N)^T$
with $u_i \approx u(x_i)$.

\subsection{Two examples of conservative semidiscretizations} \label{sec:2semi}

For the NLS equation \eqref{eq:nls-vw} with periodic boundaries, perhaps the most widely used
spatial discretization is based on Fourier collocation
spectral differentiation, wherein the second-derivative operator
$\partial_{xx}$ is approximated on a uniform grid in space by
a symmetric operator related to the discrete Fourier transform
on a uniform grid.  Semidiscretizations based on this are known to preserve the
discrete mass (see e.g. \cite{bao2002time}) which is given by
\begin{align} \label{eq:mass-nls}
    \mass = \vec{v}^T M \vec{v} + \vec{w}^T M \vec{w}
    \approx \int \bigl( v^2 + w^2 \bigr) \dif x
    = \int |u|^2 \dif x
\end{align}
where $\vec{v}, \vec{w}$ are the real and imaginary parts of $\vec{u}$,
respectively, and $M = \Delta x \I$ is the mass matrix obtained by multiplying
the identity matrix $\I$ by the grid spacing $\Delta x = x_{i + 1} - x_{i}$.

Next we consider a second-order-accurate finite element discretization that conserves both
discrete mass \eqref{eq:mass-nls} and the energy functional
\begin{equation}
\label{eq:energy-nls-FEM}
\begin{aligned}
  \energy
  &=
  \Delta x \sum_{i=1}^{N-1} \left( \frac{(v_{i+1} - v_i)^2}{\Delta x^2} + \frac{(w_{i+1} - w_i)^2}{\Delta x^2} \right)
  - \frac{\beta}{2} \vec{1}^T M \bigl( \vec{v}^2 + \vec{w}^2 \bigr)^2
  \\
  &\approx
  \int \left( (v_x)^2 + (w_x)^2 - \frac{\beta}{2} \bigl( v^2 + w^2 \bigr)^2 \right)
  =
  \int \left( |u_x|^2 - \frac{\beta}{2} |u|^4 \right).
\end{aligned}
\end{equation}
The scheme is \cite{herbst1985numerical}
\begin{align}\label{FEMdisc}
    \vec{v}'(t) & =   M^{-1} A_2 \vec{w} - \beta \bigl( \vec{v}^2 + \vec{w}^2 \bigr) \vec{w}, \\
    \vec{w}'(t) & = - M^{-1} A_2 \vec{v} + \beta \bigl( \vec{v}^2 + \vec{w}^2 \bigr) \vec{v},
\end{align}
where for homogeneous Neumann boundaries, the matrices $M$ and $A_2$ are given by
\begin{equation} \label{FEM-matrices}
    M = \Delta x \begin{pmatrix}
    \frac{1}{2} &  &  &  &  \\
     & 1 &  &  &  \\
     &  & {\ddots} &  &  \\
     &  &  & 1 &  \\
     &  &  &  & \frac{1}{2}
    \end{pmatrix}
    \ \ \text{,} \ \
    A_2 = \frac{1}{\Delta x}
    \begin{pmatrix}
     1 & -1 &  &  &  \\
     -1 & 2 & -1 &  & \\
     & \ddots & \ddots & \ddots & \\
     &  & -1 & 2 & -1\\
     &   &  & -1 & 1\\
    \end{pmatrix}\;.
\end{equation}
Both of the above semidiscretizations approximate the second derivative using a
symmetric matrix, and each of these matrices, combined with the corresponding mass
matrix (given by the scaled identity $M = \Delta x \I$ for the spectral method and
by $M$ in \eqref{FEM-matrices} for the finite element method), satisfy what is known as a summation-by-parts (SBP) property.

\subsection{Summation-by-parts operators}
\label{sec:sbp_operators}

In this section we review standard SBP operators and notation;
for more details see e.g.
\cite{mattsson2004summation,mattsson2017diagonal,svard2014review,fernandez2014review,ranocha2021broad}.
In non-periodic domains, the restriction of a numerical solution $\vec{u}$
to the boundaries is given by $\tL^T \vec{u}$ and $\tR^T \vec{u}$,
where
\begin{equation}
  \tL = (1, 0, \dots, 0)^T,
  \quad
  \tR = (0, \dots, 0, 1)^T.
\end{equation}
For periodic domains, we use the convention $\tL = \tR = \vec{0}$
to simplify the notation.

\begin{definition}
  A first-derivative SBP operator on the grid $\vec{x}$ is given by a
  consistent first-derivative operator $D_1 \in \R^{N \times N}$ and a
  symmetric and positive definite mass/norm matrix $M \in \R^{N \times N}$
  satisfying the SBP property
  \begin{equation}
    M D_1 + D_1^T M
    =
    \tR \tR^T - \tL \tL^T,
  \end{equation}
  where $\tL, \tR \in \R^N$ are the boundary restriction operators
  described above.
\end{definition}

\begin{definition}
  First-derivative upwind SBP operators on the grid $\vec{x}$ are given by
  consistent first-derivative operators $D_\pm \in \R^{N \times N}$ and a
  symmetric and positive definite mass/norm matrix $M \in \R^{N \times N}$
  satisfying the upwind SBP property
  \begin{equation}
    M D_+ + D_-^T M
    =
    \tR \tR^T - \tL \tL^T,
    \quad
    M (D_+ - D_-) \text{ is symmetric and negative semidefinite},
  \end{equation}
  where $\tL, \tR \in \R^N$ are the boundary restriction operators
  described above.
\end{definition}

\begin{definition}\label{dfn:SBP2}
  A second-derivative SBP operator on the grid $\vec{x}$ is given by a
  consistent second-derivative operator $D_2 \in \R^{N \times N}$ and a
  symmetric and positive definite mass/norm matrix $M \in \R^{N \times N}$
  satisfying the SBP property
  \begin{equation}
    M D_2
    =
    \tR \dR^T - \tL \dL^T - A_2,
    \qquad
    A_2 \text{ is symmetric and negative semidefinite},
  \end{equation}
  where $\tL, \tR \in \R^N$ are the boundary restriction operators
  described above and $\dL, \dR \in \R^N$ yield consistent first-derivative
  approximations at the boundaries.
\end{definition}
It can be verified by direct computation that both of the semidiscretizations
described in Section \ref{sec:2semi} satisfy the conditions of Definition
\ref{dfn:SBP2}.

In the following, we will only use diagonal mass/norm matrices $M$.
Concretely, we will use FDs, CG spectral element methods,
DG spectral element methods,
and Fourier collocation schemes. See, e.g., \cite{ranocha2021broad}
for details and additional references.

\subsection{A general doubly-conservative SBP semidiscretization}
\label{sec:semidiscretization}

We propose the general SBP semidiscretization
\begin{equation}
\label{eq:semidiscretization-nls}
\begin{aligned}
  \partial_t \vec{v}
  &=
  - \bigl( D_2 - M^{-1} \tR \dR^T + M^{-1} \tL \dL^T \bigr) \vec{w}
  - \beta \bigl( \vec{v}^2 + \vec{w}^2 \bigr) \vec{w},
  \\
  \partial_t \vec{w}
  &=
  \bigl( D_2 - M^{-1} \tR \dR^T + M^{-1} \tL \dL^T \bigr) \vec{v}
  + \beta \bigl( \vec{v}^2 + \vec{w}^2 \bigr) \vec{v},
\end{aligned}
\end{equation}
of the nonlinear Schr{\"o}dinger equation \eqref{eq:nls-vw}, where
$D_2$ is a second-derivative SBP operator with diagonal mass matrix $M$.
Recall that we use the convention $\tL = \tR = \dL = \dR = \vec{0}$
in periodic domains and abbreviate
\begin{equation}
  D_2 - M^{-1} \tR \dR^T + M^{-1} \tL \dL^T
  =
  -M^{-1} A_2.
\end{equation}
For CG spectral element methods with homogeneous Neumann
boundary conditions on a uniform mesh, this SBP semidiscretization is exactly
\eqref{FEMdisc}. For Fourier collocation methods, it is the standard
semidiscretization using the Fourier spectral second-derivative operator
described in Section~\ref{sec:2semi}.

In order to discuss the energy conservation property for \eqref{eq:semidiscretization-nls},
we introduce a different form of the discrete energy:
\begin{equation}
\label{eq:energy-nls}
\begin{aligned}
  \energy
  &=
  \vec{v}^T A_2 \vec{v}
  + \vec{w}^T A_2 \vec{w}
  - \frac{\beta}{2} \vec{1}^T M \bigl( \vec{v}^2 + \vec{w}^2 \bigr)^2
  \\
  &\approx
  \int \left( (v_x)^2 + (w_x)^2 - \frac{\beta}{2} \bigl( v^2 + w^2 \bigr)^2 \right)
  =
  \int \left( |u_x|^2 - \frac{\beta}{2} |u|^4 \right).
\end{aligned}
\end{equation}
In fact, \eqref{eq:energy-nls-FEM} is of the form \eqref{eq:energy-nls} since
\begin{equation}
\begin{aligned}
  &\quad
  \sum_{i=1}^{N-1} (v_{i+1} - v_i)^2
  =
  \sum_{i=1}^{N-1} (v_{i+1}^2 - 2 v_i v_{i+1} + v_i^2)
  =
  v_1^2 + 2 \sum_{i=2}^{N-1} v_i^2 + v_N^2
  - 2 \sum_{i=1}^{N-1} v_i v_{i+1}
  \\
  &=
  v_1^2 + 2 \sum_{i=2}^{N-1} v_i^2 + v_N^2
  - \sum_{i=1}^{N-1} v_i v_{i+1} - \sum_{i=2}^N v_i v_{i-1} \\
  & =
  \sum_{i=2}^{N-1} v_i (-v_{i+1} + 2 v_i - v_{i-1})
  + v_1 (v_1 - v_2)
  + v_N (v_N - v_{N-1})
  =
  \Delta x \, \vec{v}^T A_2 \vec{v}.
\end{aligned}
\end{equation}
The form \eqref{eq:energy-nls} using the discrete second-derivative operator
can be generalized more easily to general SBP operators. Thus, we will use
\eqref{eq:energy-nls} in the following.
We will see in Section~\ref{sec:kinetic_energy_comparison} that this
choice of the discrete total energy is crucial.

\begin{theorem}
\label{thm:semidiscrete_conservation_nls}
  The semidiscretization \eqref{eq:semidiscretization-nls} conserves the
  discrete total mass $\mass$ \eqref{eq:mass-nls} and the discrete
  total energy $\energy$ \eqref{eq:energy-nls} for diagonal-norm SBP
  operators.
\end{theorem}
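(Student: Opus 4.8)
The plan is to differentiate each discrete invariant along solutions of \eqref{eq:semidiscretization-nls} and verify that the derivative vanishes. Throughout I use the abbreviation $D_2 - M^{-1}\tR\dR^T + M^{-1}\tL\dL^T = -M^{-1}A_2$, so the scheme reads $\partial_t\vec{v} = M^{-1}A_2\vec{w} - \beta(\vec{v}^2+\vec{w}^2)\vec{w}$ and $\partial_t\vec{w} = -M^{-1}A_2\vec{v} + \beta(\vec{v}^2+\vec{w}^2)\vec{v}$. Two structural facts drive everything: $A_2$ is symmetric (Definition~\ref{dfn:SBP2}), and $M$ is diagonal, so $M^{-1}$ acts pointwise and commutes with the component-wise products appearing in the nonlinearity.

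For the mass \eqref{eq:mass-nls}, I differentiate to get $\tfrac12\tfrac{\dif}{\dif t}\mass = \vec{v}^T M\partial_t\vec{v} + \vec{w}^T M\partial_t\vec{w}$. Substituting the scheme, the linear contributions combine to $\vec{v}^T A_2\vec{w} - \vec{w}^T A_2\vec{v}$, which vanishes by symmetry of $A_2$. The nonlinear contributions are $-\beta\,\vec{v}^T M\bigl[(\vec{v}^2+\vec{w}^2)\vec{w}\bigr] + \beta\,\vec{w}^T M\bigl[(\vec{v}^2+\vec{w}^2)\vec{v}\bigr]$; since $M$ is diagonal, each reduces to $\pm\beta\sum_i M_{ii}(v_i^2+w_i^2)v_iw_i$, and the two cancel pointwise.

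For the energy \eqref{eq:energy-nls} I differentiate the two parts separately. The quadratic part gives $2\vec{v}^T A_2\partial_t\vec{v} + 2\vec{w}^T A_2\partial_t\vec{w}$; its linear contribution $2\bigl(\vec{v}^T A_2 M^{-1}A_2\vec{w} - \vec{w}^T A_2 M^{-1}A_2\vec{v}\bigr)$ vanishes because $A_2 M^{-1}A_2$ is symmetric, leaving the cubic terms $-2\beta\bigl(\vec{v}^T A_2[(\vec{v}^2+\vec{w}^2)\vec{w}] - \vec{w}^T A_2[(\vec{v}^2+\vec{w}^2)\vec{v}]\bigr)$. For the quartic part I apply the chain rule to $\vec{1}^T M(\vec{v}^2+\vec{w}^2)^2$: differentiating $(v_i^2+w_i^2)^2$ and inserting $\partial_t\vec{v},\partial_t\vec{w}$, the $\beta$-proportional self-interaction cancels pointwise, and the remaining factor $M_{ii}^{-1}$ coming from the time derivatives cancels against the $M_{ii}$ weighting the quartic density. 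This yields $-2\beta\bigl([(\vec{v}^2+\vec{w}^2)\vec{v}]^T A_2\vec{w} - [(\vec{v}^2+\vec{w}^2)\vec{w}]^T A_2\vec{v}\bigr)$. Transposing these scalar expressions via symmetry of $A_2$ shows they are the exact negatives of the cubic terms from the quadratic part, so $\tfrac{\dif}{\dif t}\energy = 0$.

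The linear cancellations are immediate from symmetry and need no comment. The only delicate point is the exact cancellation of the cubic terms in the energy balance: this is precisely where the specific cubic nonlinearity of \eqref{eq:semidiscretization-nls} and the diagonality of $M$ conspire. With a non-diagonal norm matrix, the pointwise $M^{-1}$ arising from the time derivatives would fail to line up with the $M$ weighting the quartic energy density, and the cubic terms would no longer match—this is the reason the hypothesis of a diagonal-norm SBP operator is essential, and I expect it to be the main thing to get right.
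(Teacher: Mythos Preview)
Your proof is correct and follows essentially the same approach as the paper's: differentiate each invariant, substitute the semidiscretization, and cancel terms using the symmetry of $A_2$ (and of $A_2 M^{-1} A_2$) together with the diagonality of $M$. The paper presents the same computation in slightly more compressed form, without your explicit commentary on why the diagonal-norm hypothesis is needed.
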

\begin{proof}
  Since the mass matrix $M$ is symmetric, we have
  \begin{equation}
  \begin{aligned}
    \partial_t \mass
    &=
    2 \vec{v}^T M \partial_t \vec{v}
    + 2 \vec{w}^T M \partial_t \vec{w}
    \\
    &=
    - 2 \vec{v}^T A_2 \vec{w}
    - 2 \beta \vec{v}^T M \bigl( \vec{v}^2 + \vec{w}^2 \bigr) \vec{w}
    + 2 \vec{w}^T A_2 \vec{v}
    + 2 \beta \vec{w}^T M \bigl( \vec{v}^2 + \vec{w}^2 \bigr) \vec{v}
    = 0,
  \end{aligned}
  \end{equation}
  where we have used the symmetry of $A_2$ and the fact that
  the mass matrix $M$ is diagonal. For the energy, we compute
  \begin{equation}
  \begin{aligned}
    \partial_t \energy
    &=
    2 \vec{v}^T A_2 \partial_t \vec{v}
    + 2 \vec{w}^T A_2 \partial_t \vec{w}
    - 2 \beta \vec{1}^T M \bigl( \vec{v}^2 + \vec{w}^2 \bigr) \vec{v} \partial_t \vec{v}
    - 2 \beta \vec{1}^T M \bigl( \vec{v}^2 + \vec{w}^2 \bigr) \vec{w} \partial_t \vec{w}
    \\
    &=
    2 \vec{v}^T A_2 M^{-1} A_2 \vec{w}
    - 2 \beta \vec{v}^T A_2 \bigl( \vec{v}^2 + \vec{w}^2 \bigr) \vec{w}
    - 2 \vec{w}^T A_2 M^{-1} A_2 \vec{v}
    \\
    &\quad
    + 2 \beta \vec{w}^T A_2 \bigl( \vec{v}^2 + \vec{w}^2 \bigr) \vec{v}
    - 2 \beta \vec{1}^T \bigl( \vec{v}^2 + \vec{w}^2 \bigr) \vec{v} A_2 \vec{w}
    + 2 \beta^2 \vec{1}^T M \bigl( \vec{v}^2 + \vec{w}^2 \bigr)^2 \vec{v} \vec{w}
    \\
    &\quad
    + 2 \beta \vec{1}^T \bigl( \vec{v}^2 + \vec{w}^2 \bigr) \vec{w} A_2 \vec{v}
    - 2 \beta^2 \vec{1}^T M \bigl( \vec{v}^2 + \vec{w}^2 \bigr)^2 \vec{v} \vec{w}
    \\
    &=
    0,
  \end{aligned}
  \end{equation}
  where we have again used the symmetry of $A_2$ and the fact that
  the mass matrix $M$ is diagonal.
\end{proof}

\subsection{Importance of the kinetic energy discretization}
\label{sec:kinetic_energy_comparison}

It is crucial to choose the discretization of the kinetic energy using the second-derivative SBP operator as in \eqref{eq:energy-nls}.
To demonstrate this, we consider the two-soliton setup described in
\cite{biswas2024accurate}
and discretize it with a Fourier collocation method using $N$ nodes.
We compute the absolute changes of the total mass \eqref{eq:mass-nls}, the total energy \eqref{eq:energy-nls}
(computed correctly using the second-derivative SBP operator),
and a naive version of the total energy where the kinetic energy terms are computed as
\begin{equation}
  \vec{v}^T D_1^T M D_1 \vec{v} \approx \int |v_x|^2,
\end{equation}
where $M = \Delta x \I$ is the usual Fourier collocation mass matrix and
$D_1$ the corresponding first-derivative operator.

The second-derivative FFT-based operator $D_2$ is the
same as the square of the first-derivative FFT-based operator $D_1$
if and only if the number of nodes $N$ is odd \cite{johnson2011notes}.
Thus, we expect to see some clear differences for even $N$ (which
is usually chosen in practice for efficiency) but not for odd $N$
(which is rarely used in practice, if at all).

\begin{figure}[htb]
  \centering
  \includegraphics[width=\textwidth]{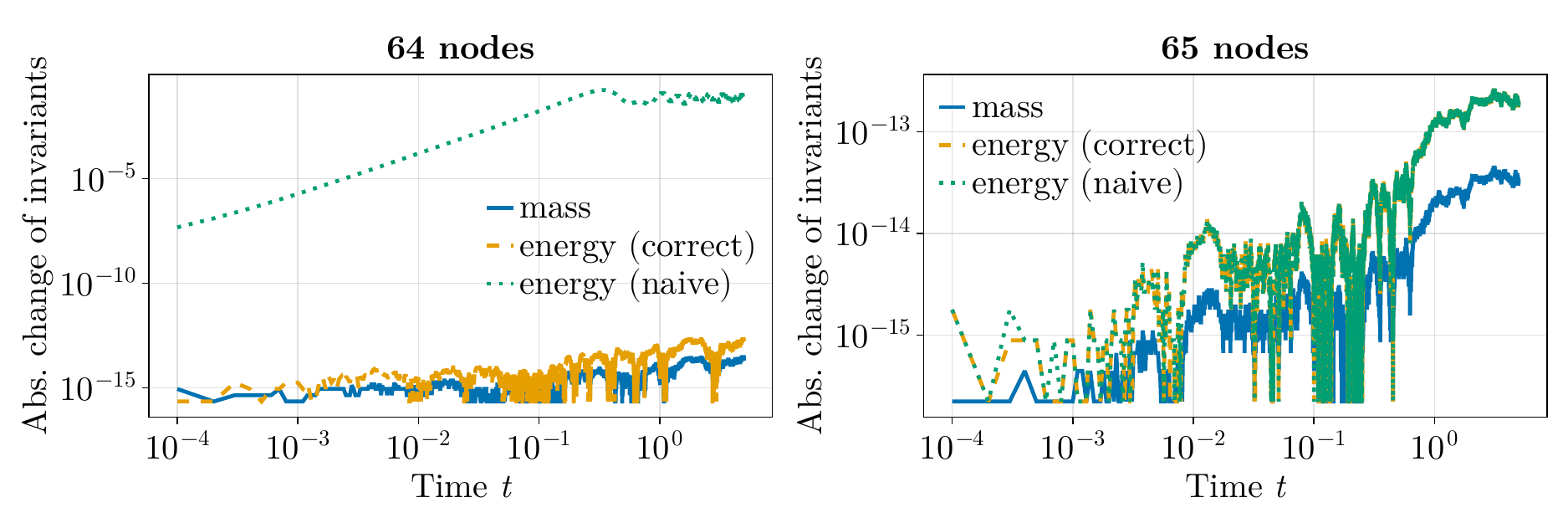}
  \caption{Absolute changes of the discretized invariants of the
           nonlinear Schr{\"o}dinger equation for Fourier collocation
           methods with $N \in \{64, 65\}$ nodes in the domain $[-35, 35]$
           for the two-soliton setup. The semidiscretization is
           integrated in time with the fifth-order method of
           \cite{kennedy2019higher} and time step size
           $\Delta t = 10^{-4}$.}
  \label{fig:kinetic_energy_comparison}
\end{figure}

The results shown in Figure~\ref{fig:kinetic_energy_comparison}
support the theoretical prediction. Indeed, the total mass and the
correctly discretized total energy are conserved up to the error
of the time integrator, which is close to machine precision due
to sufficiently small time step sizes to highlight the spatial effects.
In contrast, the naive discretization of the total energy varies
significantly.

\subsection{Numerical verification of the semidiscrete invariant conservation}
\label{sec:semidiscrete_conservation}

Having verified the mass and energy conservation for Fourier collocation
methods, we next demonstrate these structure-preserving properties for
FD, CG, and local DG (LDG)
SBP operators in periodic and bounded domains.
As in Section~\ref{sec:kinetic_energy_comparison}, we choose the
two-soliton setup, coarse spatial resolutions, and highly resolved
time discretizations.
The results shown in Figure~\ref{fig:semidiscrete_conservation} support
the analysis, i.e., the total mass and the total energy are conserved
up to a small error caused by the time integrator.

\begin{figure}[htb]
  \centering
  \includegraphics[width=\textwidth]{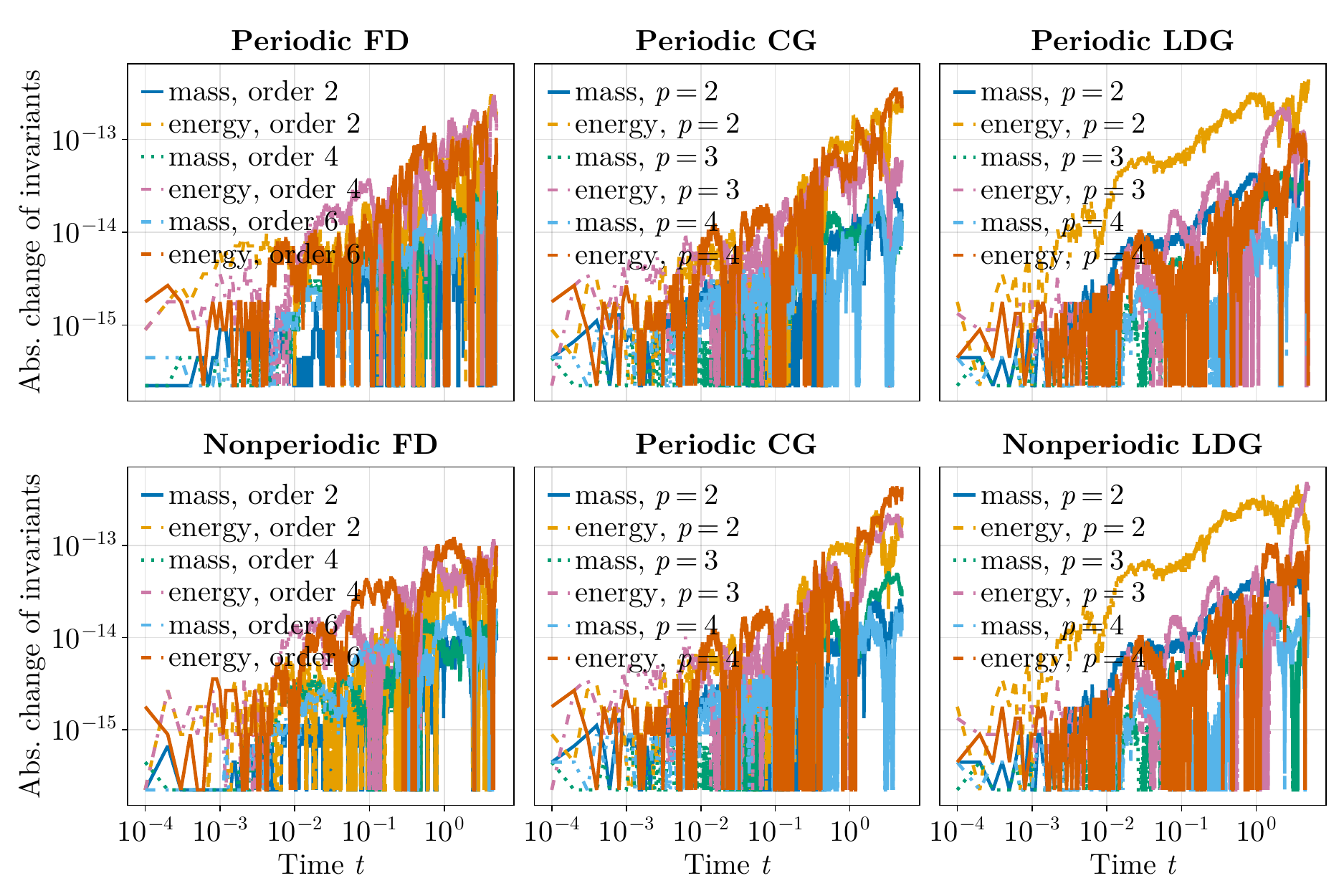}
  \caption{Absolute changes of the discretized invariants of the
           nonlinear Schr{\"o}dinger equation for the two-soliton setup.
           The semidiscretizations are integrated in time with the
           fifth-order method of \cite{kennedy2019higher} and time step
           size $\Delta t = 10^{-4}$. For all semidiscretizations, we
           use approximately 64 degrees of freedom with slight deviations
           depending on the polynomial degree $p$ for Galerkin methods.}
  \label{fig:semidiscrete_conservation}
\end{figure}

\subsection{Convergence tests in space}

Next, we use the two-soliton setup again to verify the high-order accuracy of the spatial semidiscretizations.
We choose a sufficiently small time step size $\Delta t = 5 \times 10^{-5}$ to make sure that the error is dominated by the spatial discretization.

\begin{figure}[htb]
  \centering
  \includegraphics[width=\textwidth]{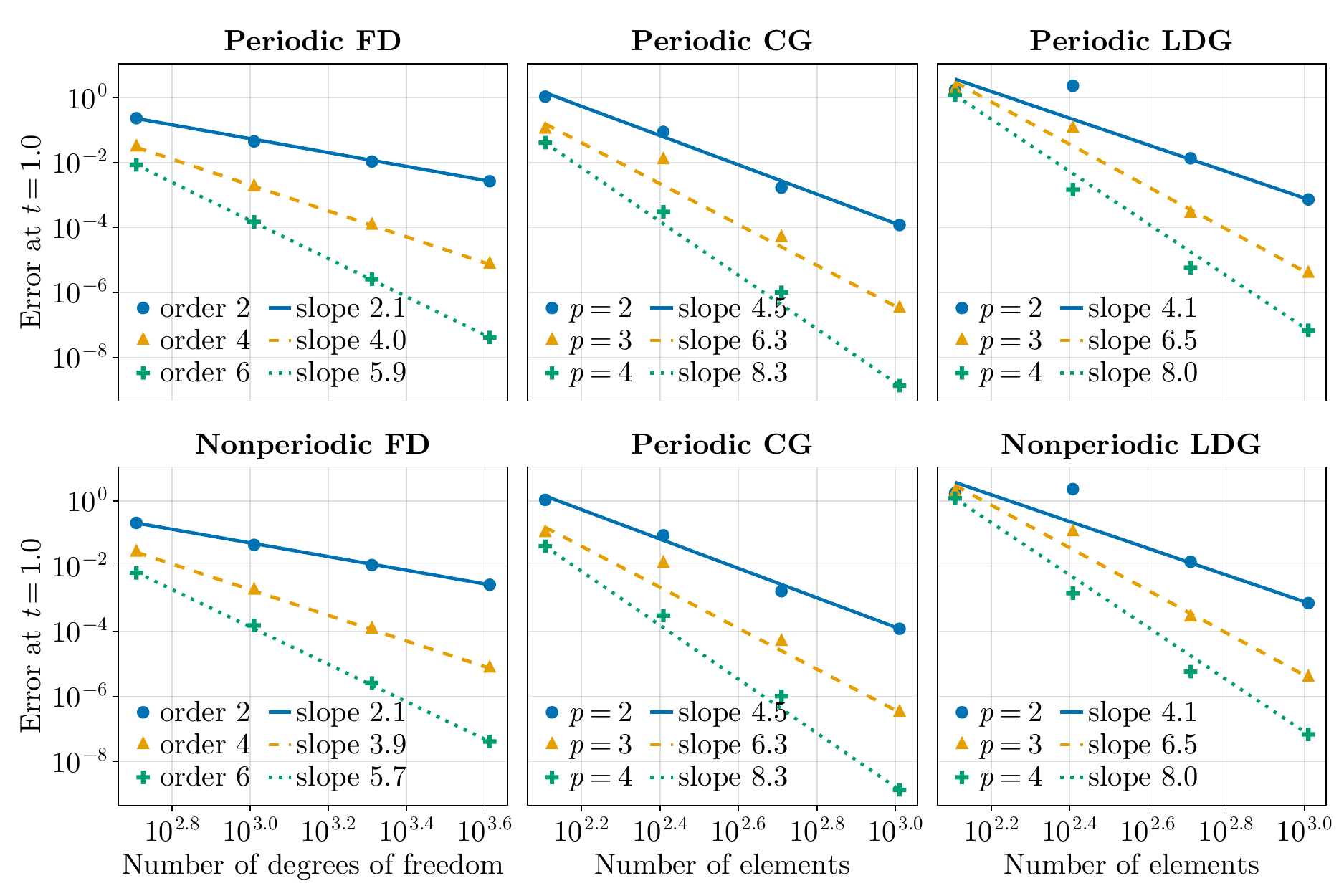}
  \caption{Spatial convergence study for the two-soliton setup.
           The semidiscretizations are integrated in time with the
           fifth-order method of \cite{kennedy2019higher} and time step
           size $\Delta t = 5 \times 10^{-5}$.
           For finite difference methods, the ``order'' refers to the
           order of accuracy in the interior. For Galerkin methods,
           $p$ is the polynomial degree.}
  \label{fig:convergence_in_space}
\end{figure}

The results shown in Figure~\ref{fig:convergence_in_space} confirm the high accuracy of the spatial semidiscretizations.
For all methods, we measured the error in the discrete $L^2$ norm induced by the (diagonal) mass matrix $M$.
For periodic finite differences, we observe the expected order of convergence determined by the order of accuracy.
For finite differences with homogeneous Neumann boundary conditions, we also observe the same order of convergence, although SBP operators typically lead to a reduced order of convergence due to the lower order of accuracy at the boundaries \cite{svard2019convergence,svard2020convergence}.
However, we have chosen the domains large enough so that the interaction with the boundaries is negligible (since the soliton solutions are only valid on the whole real line).

Both CG and DG methods show a superconvergence rate between $2p$ and $2p + 1/2$, where $p$ is the polynomial degree.
Superconvergence effects like this have been observed and studied for Galerkin-type methods in several cases, e.g., \cite{douglas1973superconvergence,cao2017optimal}.

\subsection{Dispersive shock wave}

Next, we consider a dispersive shock wave for the defocusing NLS with $\beta =
-1$ following \cite{gurevich1987dissipationless,dhaouadi2019extended}.  We use
a smoothed Riemann problem as initial condition:
\begin{equation}
  \varrho = \frac{\varrho_L + \varrho_R}{2} + \frac{\varrho_R - \varrho_L}{2} \tanh(100 x),
  \quad
  \theta = 0,
\end{equation}
where the hydrodynamic variables $\varrho$ and $\theta$ are related to $u$ via the Madelung transformation
\begin{equation}
  u = \sqrt{\varrho} \exp(\i \theta).
\end{equation}
As in \cite[Section~5.2]{dhaouadi2019extended}, we use the parameters $\varrho_L = 2$ and $\varrho_R = 1$.

To avoid interactions with the boundaries while focusing on the dispersive shock wave, we consider the large domain $[-1600, 1600]$ and show the numerical solution at time $t = 100$ for $-400 < x < 500$.
We use Fourier collocation methods with $N = 2^{16}$ nodes in space and the fifth-order method of \cite{kennedy2019higher} with time step size $\Delta t = 0.05$.
To plot the hydrodynamic variables $\varrho = |u|^2$ and $v = \theta_x$, we compute the argument of $u$ using the two-argument arc tangent, correct branch cuts, and differentiate the result numerically.

\begin{figure}[htb]
  \centering
  \includegraphics[width=\textwidth]{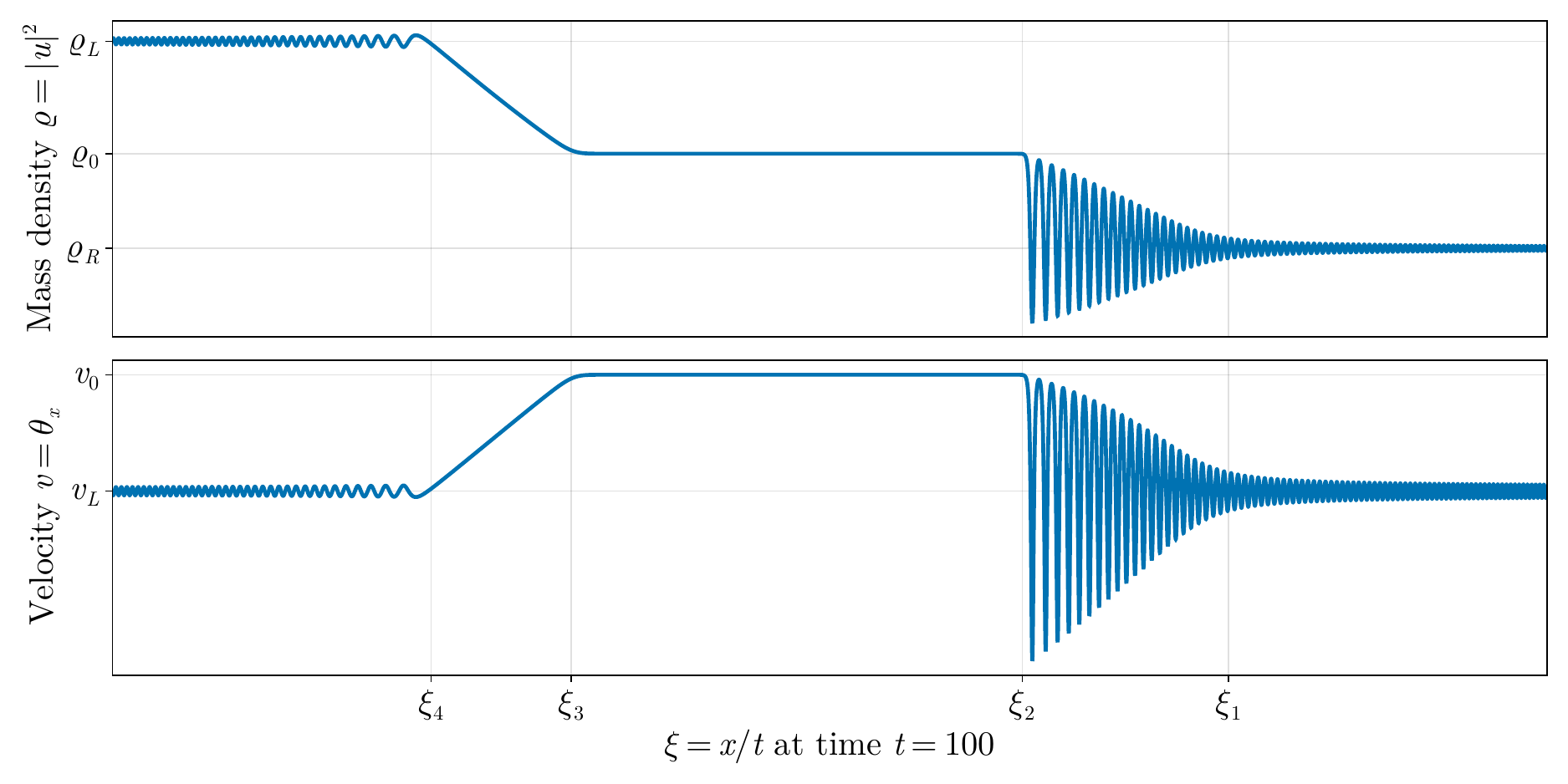}
  \caption{Numerical results for the dispersive shock wave obtained
           with Fourier collocation methods and the fifth-order method
           of \cite{kennedy2019higher}.}
  \label{fig:dispersive_shock_wave}
\end{figure}

The results shown in Figure~\ref{fig:dispersive_shock_wave} are in good agreement with the numerical and analytical results of \cite{dhaouadi2019extended}.
In particular, the asymptotic values $\varrho_L$, $\varrho_R$, and $v_L = 0 = v_R$ as well as the intermediate state $(\varrho_0, v_0)$ are captured correctly.
Furthermore, the extent of the waves in the self-similarity variable $\xi = x / t$ agrees well with the theoretical predictions based on Whitham's modulation equations \cite[Section~2.2.2]{dhaouadi2019extended}.

\section{Time discretizations}
\label{sec:time_discretizations}

Many differential equations possess one or more linear invariants, and
these are automatically preserved by Runge-Kutta and multistep methods.
In order to also preserve a nonlinear invariant, one can use
orthogonal projection (see e.g. \cite[Section~IV.4]{hairer2006geometric}) or relaxation (see
e.g. \cite{ranocha2020general}).  However, orthogonal projection is problematic
in that setting because it destroys the linear invariant.  Relaxation is
advantageous in this case and can be viewed as an oblique projection, along a
line that is known to conserve linear invariants.

In this section we propose a quadratic-preserving relaxation method, analogous
to the linear-preserving relaxation method just mentioned.
It performs a projection or line search along a curve
known to preserve one simple (in this case quadratic) invariant,
in order to find a solution that also preserves a second nonlinear invariant.
Due to the simplicity of projecting onto a quadratic invariant manifold,
this leads to an efficient and robust algorithm and avoids the need to
solve more than one nonlinear algebraic equation, as was required
with the multiple relaxation approach in \cite{biswas2023multiple,biswas2024accurate}.

\subsection{Quadratic-preserving relaxation}

Consider an ODE
\begin{equation}
\label{eq:ode}
  \frac{\dif}{\dif t} u(t) = f\bigl( u(t) \bigr)
\end{equation}
with invariant (first integral) $\eta$, i.e.,
\begin{equation}
  \forall u \colon \quad \eta'(u) f(u) = 0.
\end{equation}
Relaxation one-step methods
\cite{ketcheson2019relaxation,ranocha2020relaxation,ranocha2020general} perform
the following steps to conserve $\eta$:
\begin{enumerate}
  \item Given $u^n \approx u(t^n)$, compute a provisional value
        $\tilde{u}^{n+1} \approx u\bigl( \tilde{t}^{n+1} \bigr)$ using a standard
        one-step method, where $\tilde{t}^{n+1} = t^n + \Delta t$.
  \item Solve the scalar equation
        \begin{equation}
          \eta\bigl( u^n + \gamma (\tilde{u}^{n+1} - u^n) \bigr)
          =
          \eta(u^n)
        \end{equation}
        for the scalar relaxation parameter $\gamma$.
  \item Continue the integration with
        \begin{equation}
          u^{n+1} = u^n + \gamma \bigl( \tilde{u}^{n+1} - u^n \bigr)
          \approx u\bigl( t^{n+1} \bigr),
          \qquad
          t^{n+1} = t^n + \gamma \Delta t,
        \end{equation}
        instead of $\tilde{u}^{n+1}$ and $\tilde{t}^{n+1}$.
\end{enumerate}
The available theory guarantees that there is a unique solution
$\gamma = 1 + \mathcal{O}(\Delta t^{p-1})$ under a certain non-degeneracy
condition, where $p \ge 2$ is the order of the baseline method; in
this case, the relaxed method is also at least $p$-th order accurate
\cite{ketcheson2019relaxation,ranocha2020relaxation,ranocha2020general}.

By construction, the relaxed method conserves the invariant $\eta$
and also all linear invariants conserved by the baseline method, i.e.,
all linear invariants of the ODE for general linear methods such as
Runge-Kutta methods.
Since the NLS has no linear invariants
but two nonlinear invariants (mass and energy) that are conserved by
the spatial discretizations introduced in
Section~\ref{sec:spatial_semidiscretizations},
we generalize the relaxation approach to this setting.
Instead of searching for a solution conserving the invariant $\eta$
along the secant line connecting $u^n$ and $\tilde{u}^{n+1}$,
we propose to perform the search along the geodesic line connecting
$u^n$ and $\tilde{u}^{n+1}$ on the manifold defined by the total mass,
i.e., a sphere. By construction, this approach conserves the total mass
and the total energy.

To describe the method, we consider a slightly more general setting.
Assume that we have an ODE \eqref{eq:ode} with two invariants
$\eta$ and $\mu$. Moreover, assume that the projection onto the
manifold defined by the invariant $\mu$ is given by
the (general, non-linear) projection operator $\pi$.
The new quadratic-preserving relaxation method performs the following steps:
\begin{enumerate}
  \item Given $u^n \approx u(t^n)$, compute a provisional value
        $\tilde{u}^{n+1} \approx u\bigl( \tilde{t}^{n+1} \bigr)$
        using a standard one-step method, where
        $\tilde{t}^{n+1} = t^n + \Delta t$.
  \item Project the provisional value onto the manifold defined by
        the projection operator $\pi$:
        \begin{equation}
          \hat{u}^{n+1} = \pi\bigl( \tilde{u}^{n+1} \bigr).
        \end{equation}
  \item Solve the scalar equation
        \begin{equation}
          \eta\Bigl( \pi \bigl( u^n + \gamma \bigl( \hat{u}^{n+1} - u^n \bigr) \bigr) \Bigr)
          =
          \eta(u^n)
        \end{equation}
        for the scalar relaxation parameter $\gamma$.
  \item Continue the integration with
        \begin{equation}
          u^{n+1} = \pi \bigl( u^n + \gamma \bigl( \hat{u}^{n+1} - u^n \bigr) \bigr)
          \approx u\bigl( t^{n+1} \bigr),
          \qquad
          t^{n+1} = t^n + \gamma \Delta t,
        \end{equation}
        instead of $\tilde{u}^{n+1}$ and $\tilde{t}^{n+1}$.
\end{enumerate}
For the nonlinear Schr{\"o}dinger equation, the projection operator $\pi$
is the projection onto the sphere defined by the total mass, i.e.,
\begin{equation}
  \pi\bigl( \tilde{u}^{n+1} \bigr)
  =
  \sqrt{\frac{\mass\bigl( u^{n} \bigr)}
             {\mass\bigl( \tilde{u}^{n+1} \bigr)}} \tilde{u}^{n+1}.
\end{equation}
This approach is justified by the following theorem.
\begin{theorem}
\label{thm:geodesic_relaxation}
  Assume that the ODE \eqref{eq:ode} has the invariants $(\eta, \mu)$
  and that $\pi$ is the projection operator onto the manifold defined
  by the invariant $\mu$. Moreover, assume that the baseline one-step
  method is of order $p \ge 2$ and that
  \begin{equation}
  \label{eq:nondegeneracy_condition}
    \eta'\bigl( u^n \bigr) \pi'\bigl( u^n \bigr) f'\bigl( u^n \bigr) f\bigl( u^n \bigr)
    \ne 0.
  \end{equation}
  Then, the quadratic-preserving relaxation method described above is well-defined
  for sufficiently small time step sizes $\Delta t$; there is a
  unique solution $\gamma = 1 + \mathcal{O}(\Delta t^{p-1})$ and
  the quadratic-preserving relaxation method is at least of order $p$
  (when measuring the error at the relaxed time $t^{n+1}$).
  Moreover, it conserves both invariants $\eta$ and $\mu$.
\end{theorem}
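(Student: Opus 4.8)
The plan is to reduce the whole statement to the solvability and accuracy of the scalar relaxation equation, since conservation of both invariants holds by construction. Indeed, the updated value $u^{n+1} = \pi\bigl(u^n + \gamma(\hat u^{n+1} - u^n)\bigr)$ lies on the manifold $\{\mu = \mu(u^n)\}$ by definition of $\pi$, so $\mu(u^{n+1}) = \mu(u^n)$; and step~3 directly enforces $\eta(u^{n+1}) = \eta(u^n)$. Thus it only remains to show that the scalar equation has a unique root $\gamma = 1 + \mathcal{O}(\Delta t^{p-1})$ and that the resulting step is of order $p$. I would set up the root function $\psi(\gamma,\Delta t) := \eta\bigl(\pi(u^n + \gamma d)\bigr) - \eta(u^n)$ with $d := \hat u^{n+1} - u^n$, and record its structure via Taylor expansion in $\Delta t$.

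The basic ingredients are $\pi(u^n) = u^n$ (so $\gamma = 0$ is always a spurious root and $\psi$ is divisible by $\gamma$), the fact that $P := \pi'(u^n)$ is a projection onto the tangent space at $u^n$ of the level set $\{\mu = \mu(u^n)\}$, and that $f := f(u^n)$ is tangent to this manifold because $\mu$ is an invariant, so $Pf = f$ and $d = \Delta t\,f + \mathcal{O}(\Delta t^2)$. A short computation then gives $\psi'(0) = \mathcal{O}(\Delta t^2)$, since the leading term is $\Delta t\,\eta'(u^n)f = 0$, and $\psi''(0) = 2B\,\Delta t^2 + \mathcal{O}(\Delta t^3)$ with $2B = \eta''(u^n)(f,f) + \eta'(u^n)\pi''(u^n)(f,f)$, so that $\psi(\gamma,\Delta t) = \Delta t^2(\alpha\gamma + B\gamma^2) + \mathcal{O}(\Delta t^3)$. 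Using that the baseline method is order $p$ and that the exact trajectory stays on $\{\mu = \mu(u^n)\}$, the provisional value $\tilde u^{n+1}$ is within $\mathcal{O}(\Delta t^{p+1})$ of the manifold, so $\hat u^{n+1} - \tilde u^{n+1} = \mathcal{O}(\Delta t^{p+1})$ and $\psi(1,\Delta t) = \mathcal{O}(\Delta t^{p+1})$; since $p \ge 2$ this forces $\alpha + B = 0$, hence $\psi = B\,\Delta t^2\,\gamma(\gamma-1) + \mathcal{O}(\Delta t^3)$, whose nontrivial root sits at $\gamma = 1$.

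To connect $B$ with the stated condition I would differentiate the invariance identity $\eta'(u)f(u) \equiv 0$ to obtain $\eta''(u^n)(f,f) = -\eta'(u^n)f'(u^n)f$, and identify $\pi''(u^n)(f,f) = (\I - P)f'(u^n)f$ by choosing the exact trajectory $s \mapsto u(t^n + s)$ as a curve in the manifold through $u^n$ with velocity $f$ and acceleration $f'(u^n)f$. These two identities collapse to $2B = -\eta'(u^n)\pi'(u^n)f'(u^n)f$, so that $B \ne 0$ is exactly the non-degeneracy condition \eqref{eq:nondegeneracy_condition}. Existence, uniqueness, and the rate then follow from the implicit function theorem applied after dividing out the spurious root $\gamma = 0$ and the common factor $\Delta t^2$: the reduced equation $H(\gamma,\Delta t) = 0$ satisfies $H(1,0) = 0$ and $\partial_\gamma H(1,0) = B \ne 0$, giving a unique smooth $\gamma(\Delta t)$ with $\gamma(0) = 1$; combining $\psi(1,\Delta t) = \mathcal{O}(\Delta t^{p+1})$ with $\partial_\gamma\psi(1,\Delta t) = B\,\Delta t^2 + \mathcal{O}(\Delta t^3)$ yields $\gamma = 1 + \mathcal{O}(\Delta t^{p-1})$.

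For the order of accuracy I would study $e(\gamma) := \pi(u^n + \gamma d) - u(t^n + \gamma\Delta t)$, establish $e(1) = \mathcal{O}(\Delta t^{p+1})$, and prove the crucial cancellation $e'(1) = \mathcal{O}(\Delta t^2)$ (the two $\mathcal{O}(\Delta t)$ contributions $\pi'(\hat u^{n+1})d$ and $u'(t^n+\Delta t)\Delta t$ agree to leading order). Taylor expansion at the relaxed value $\gamma = 1 + \mathcal{O}(\Delta t^{p-1})$ then gives the local error $u^{n+1} - u(t^{n+1}) = \mathcal{O}(\Delta t^{p+1})$, i.e.\ global order $p$ when the error is measured at the relaxed time $t^{n+1} = t^n + \gamma\Delta t$. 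I expect the two main obstacles to be the geometric identification of $\pi''(u^n)(f,f)$ that matches $B$ precisely to \eqref{eq:nondegeneracy_condition}, and the degenerate application of the implicit function theorem, which requires factoring out both the spurious root $\gamma = 0$ and the power $\Delta t^2$ that makes the naive Jacobian vanish.
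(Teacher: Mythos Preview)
Your proof is correct and rests on the same core tool as the paper---the implicit function theorem applied to a suitably rescaled residual---but you organize the argument differently. The paper substitutes $\gamma = 1 + \delta\,\Delta t^{p-2}$ up front, defines $r(\Delta t,\delta) = \Delta t^{-p}\bigl(\eta(\pi(\hat u^{n+1}+\delta\Delta t^{p-2}(\hat u^{n+1}-u^n)))-\eta(u^n)\bigr)$, and checks $r(0,0)=0$, $\partial_\delta r(0,0)\ne 0$ directly; for the order it simply cites an external lemma. Your route is more bottom-up: you first uncover the quadratic-in-$\gamma$ leading structure $\psi = B\,\Delta t^2\,\gamma(\gamma-1)+\mathcal{O}(\Delta t^3)$, then factor out the spurious root $\gamma=0$ and the common $\Delta t^2$, and apply IFT at $(\gamma,\Delta t)=(1,0)$. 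Your identification $2B = \eta''(f,f)+\eta'\pi''(f,f) = -\eta'\pi' f'f$, obtained by differentiating the exact trajectory on the manifold, is a nice self-contained derivation of the link between $B$ and the nondegeneracy condition; the paper obtains the same identity only in the subsequent Remark. Likewise, your explicit local-error argument via $e(\gamma)$ and the cancellation $e'(1)=\mathcal{O}(\Delta t^2)$ (which hinges on $Pf=f$) replaces the paper's citation of \cite[Lemma~2.7]{ranocha2020general}. The paper's substitution is slightly slicker because it bakes in the target rate $\gamma-1=\mathcal{O}(\Delta t^{p-1})$; your factoring is more transparent about why the nontrivial root sits near $\gamma=1$, at the cost of having to argue that $H=\psi/(\gamma\,\Delta t^2)$ extends smoothly to $\Delta t=0$ (which is fine once one notes $\psi(\gamma,0)=\partial_{\Delta t}\psi(\gamma,0)=0$).
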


\begin{remark}
  The non-degeneracy condition \eqref{eq:nondegeneracy_condition}
  is the same as for standard relaxation methods if
  $\pi = \operatorname{id}$ is the identity. It guarantees, for example,
  that $u^n$ is not a steady state of the ODE (in which case the baseline
  method would not change the solution and $\gamma$ would be undefined).
  Since $\eta\bigl( \pi(u) \bigr) \equiv \mathrm{const}$, we have
  \begin{equation}
    \eta' \pi' f = 0,
    \quad
    \eta''(\pi' f, f) + \eta' \pi''(f, f) + \eta' \pi' f' f = 0.
  \end{equation}
  Hence, for $\pi = \operatorname{id}$, the non-degeneracy condition
  \eqref{eq:nondegeneracy_condition} is equivalent to
  $\eta' f' f = - \eta''(f, f) \ne 0$. This is satisfied, for example,
  if $\eta$ is strictly convex and $f\bigl( u^n \bigr) \ne 0$.
\end{remark}

\begin{proof}[Proof of Theorem~\ref{thm:geodesic_relaxation}]
  The proof is based on the implicit function theorem, similar to
  proofs for standard projection and relaxation methods, e.g.,
  \cite[Theorem~2]{calvo2010projection},
  \cite[Proposition~2.18]{ranocha2020relaxation}, and
  \cite[Theorem~2.14]{ranocha2020general}.
  Consider the residual
  \begin{equation}
    \eta\Bigl( \pi \bigl( u^n + \gamma \bigl( \hat{u}^{n+1} - u^n \bigr) \bigr) \Bigr)
    - \eta(u^n),
  \end{equation}
  which needs to be zero for the desired relaxation parameter $\gamma$.
  Writing $\gamma = 1 + \delta \Delta t^{p-2}$, we can formulate this
  as the equivalent problem of finding a root $\delta$ of
  \begin{equation}
    r(\Delta t, \delta)
    =
    \Delta t^{-p} \left(
      \eta\Bigl( \pi \bigl( \hat{u}^{n+1} + \delta \Delta t^{p-2} \bigl( \hat{u}^{n+1} - u^n \bigr) \bigr) \Bigr)
      - \eta(u^n)
    \right)
  \end{equation}
  depending on $\Delta t$. Since the baseline method is $p$-th order
  accurate, we have
  \begin{equation}
    \eta\Bigl( \pi \bigl( \hat{u}^{n+1} + \delta \Delta t^{p-2} \bigl( \hat{u}^{n+1} - u^n \bigr) \bigr) \Bigr)
    - \eta(u^n)
    =
    \mathcal{O}(\Delta t^{p+1}),
  \end{equation}
  if $\delta = \mathcal{O}(\Delta t)$, since the argument of $\pi$ is
  a $p$-th order approximation of $u\bigl( t^{n} + \gamma \Delta t \bigr)$,
  see \cite[Lemma~2.7]{ranocha2020general}.
  Thus,
  \begin{equation}
    r(\Delta t = 0, \delta = 0) = 0.
  \end{equation}
  Moreover, we compute
  \begin{multline}
    \partial_\delta r(\Delta t, \delta)
    =
    \Delta t^{-2} \eta'\Bigl( \pi \bigl( \hat{u}^{n+1} + \delta \Delta t^{p-2} \bigl( \hat{u}^{n+1} - u^n \bigr) \bigr) \Bigr)
    \\
    \pi'\bigl( \hat{u}^{n+1} + \delta \Delta t^{p-2} \bigl( \hat{u}^{n+1} - u^n \bigr) \bigr)
    \bigl( \hat{u}^{n+1} - u^n \bigr).
  \end{multline}
  Since the baseline method is at least second-order accurate, we have
  \begin{equation}
    \hat{u}^{n+1} - u^n
    =
    \Delta t f(u^n) + \frac{\Delta t^2}{2} f'(u^n) f(u^n) + \mathcal{O}(\Delta t^3).
  \end{equation}
  Moreover, since $\eta\bigl( \pi(u) \bigr) \equiv \mathrm{const}$, we have $\eta' \pi' f = 0$.
  Thus,
  \begin{equation}
    \partial_\delta r(\Delta t = 0, \delta = 0)
    =
    \eta'\bigl( u^n \bigr) \pi'\bigl( u^n \bigr) f'\bigl( u^n \bigr) f\bigl( u^n \bigr)
    \ne 0
  \end{equation}
  by the non-degeneracy condition \eqref{eq:nondegeneracy_condition}.
  Thus, the implicit function theorem guarantees that there is a
  unique solution $\delta = \mathcal{O}(\Delta t)$ of
  $r(\Delta t, \delta) = 0$ for sufficiently small $\Delta t$.
  This implies the existence of a unique solution
  $\gamma = 1 + \delta \Delta t^{p-2} = 1 + \mathcal{O}(\Delta t^{p-1})$.
  The order of accuracy of the quadratic-preserving relaxation method follows
  from \cite[Lemma~2.7]{ranocha2020general}.
\end{proof}

\subsection{Numerical verification of the fully-discrete invariant conservation}

To visualize the influence of the mass- and energy-preserving relaxation method in time, we consider both the two-soliton and the three-soliton setups described in
\cite{biswas2024accurate}.
We use Fourier collocation methods with $N = 2^{10}$ nodes in the domain $[-35, 35]$ for the spatial semidiscretization and the third-order method of \cite{ascher1997implicit} for the time discretization.
Following \cite{biswas2024accurate}, we visualize the numerical solutions with and without relaxation at time $t = 4.3$.

\begin{figure}[htb]
  \centering
  \includegraphics[width=\textwidth]{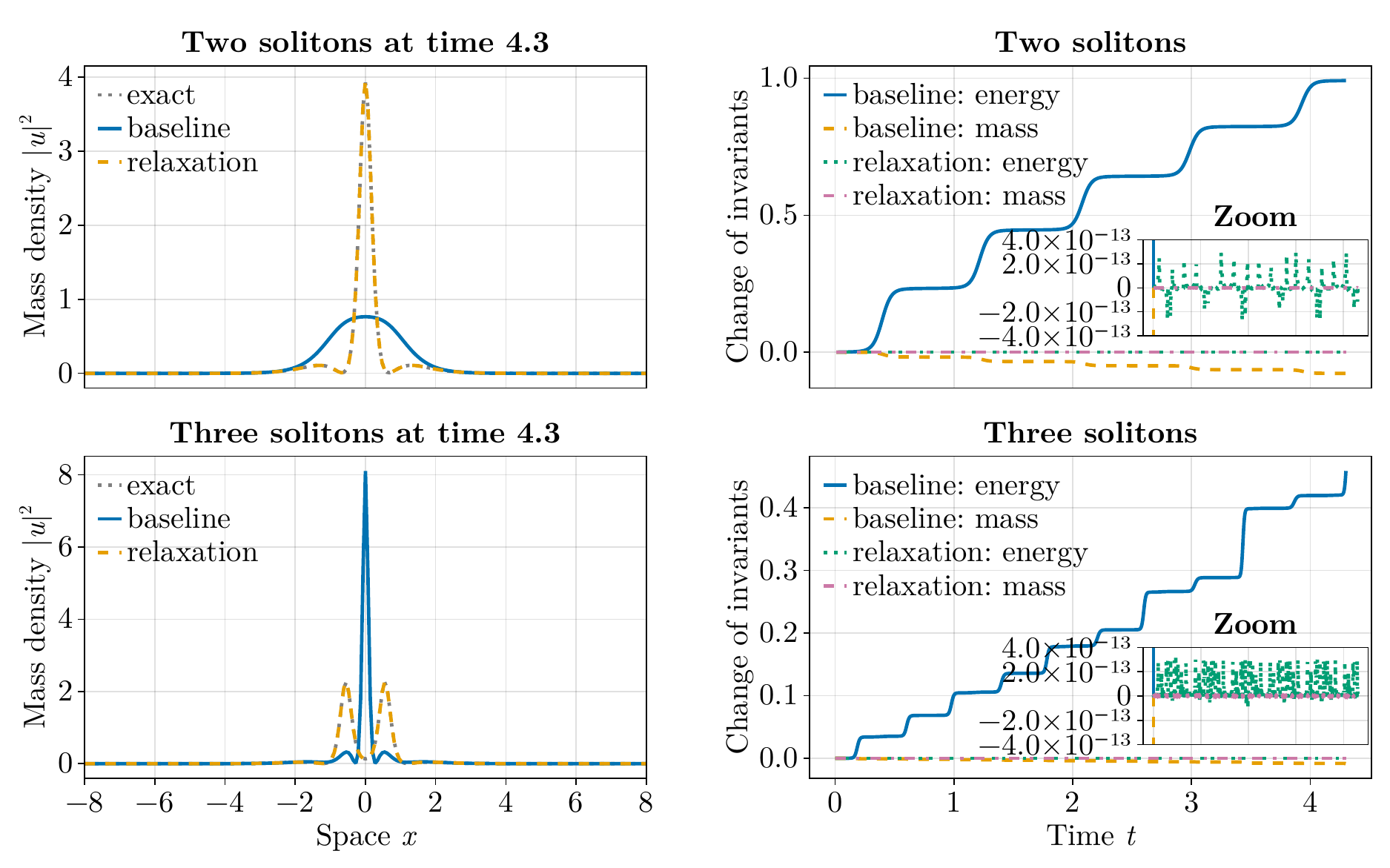}
  \caption{Numerical results obtained for the two- ($\Delta t = 10^{-2}$)
           and three-soliton ($\Delta t = 10^{-3}$)
           setups using Fourier collocation methods with $N = 2^{10}$
           nodes in the domain $[-35, 35]$ in space and the
           third-order method of \cite{ascher1997implicit} in time.}
  \label{fig:fully_discrete_conservation}
\end{figure}

The results shown in Figure~\ref{fig:fully_discrete_conservation} confirm the advantages of using relaxation to conserve mass and energy.
In particular, both mass and energy are preserved up to the error of the nonlinear solver (which is close to machine precision).
While the baseline method without relaxation does not capture the correct solution with the chosen time step sizes, the relaxed solutions are visually indistinguishable from the exact solutions.

\subsection{Convergence tests in time}

Next, we verify the high-order accuracy of the time discretizations with and without relaxation to conserve mass and energy.
Similar to \cite{biswas2024accurate}, we consider both the two-soliton and the three-soliton setup, since the one-soliton setup is easier to solve numerically.
To reduce errors of the spatial semidiscretization, we use Fourier collocation methods with $N = 2^{12}$ nodes in the domain $[-35, 35]$.

\begin{figure}[htb]
  \centering
  \includegraphics[width=\textwidth]{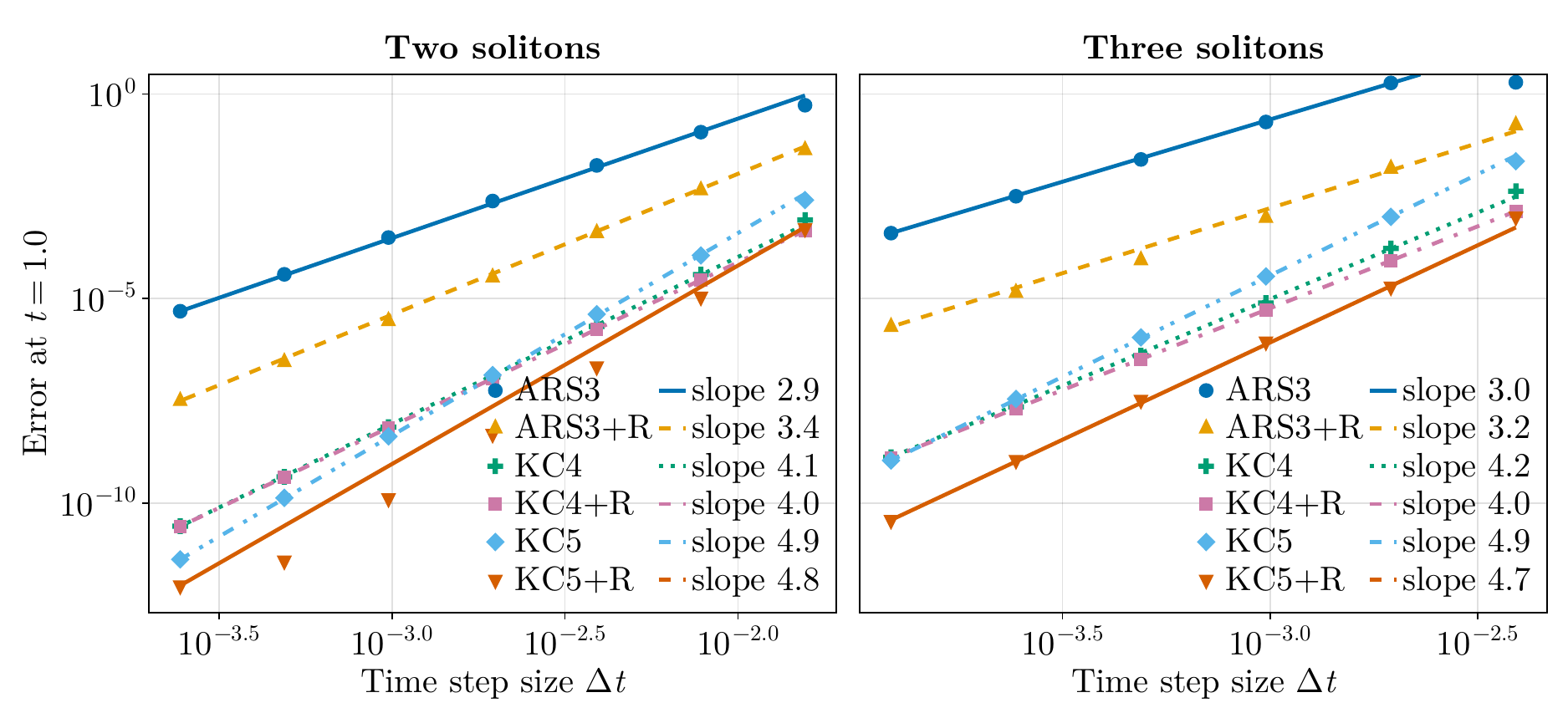}
  \caption{Temporal convergence study for the two- and three-soliton
           setups using Fourier collocation methods with $N = 2^{12}$
           nodes in the domain $[-35, 35]$ in space.
           ARS3 is the third-order method of \cite{ascher1997implicit}
           and KC4/5 are the fourth/fifth-order methods of
           \cite{kennedy2019higher}.
           ``+R'' indicates relaxation to conserve mass and energy.}
  \label{fig:convergence_in_time}
\end{figure}

The results shown in Figure~\ref{fig:convergence_in_time} confirm the high accuracy of the time discretizations.
In all cases, using relaxation to conserve mass and energy increases the accuracy.
This effect is particularly pronounced for the odd-order methods ARS3 (also known as ARS(4,4,3) of \cite{ascher1997implicit}) and KC5 (ARK5(4)8L[2]SA\textsubscript{2} of \cite{kennedy2019higher}).
Moreover, this demonstrates that the new relaxation approach works robustly.

\subsection{Error growth for multiple-soliton solutions}

In \cite{alvarez2010error} it was shown that, under certain assumptions and in the context
of the KdV equation, numerical methods that conserve $N$ invariants also exhibit linear
error growth for $N$-soliton solutions.  Here we test experimentally whether this holds
for NLS and $N=2$; we also investigate the behavior of the error for three solitons, which
goes beyond any existing theory since we have only two numerically conserved quantities.
We use the same setup as in \cite{biswas2024accurate}, employing
Fourier collocation methods with $N = 2^{10}$ nodes in the domain $[-35, 35]$ for the spatial semidiscretization and the fifth-order method of \cite{kennedy2019higher} for the time discretization.

\begin{figure}[htb]
  \centering
  \includegraphics[width=\textwidth]{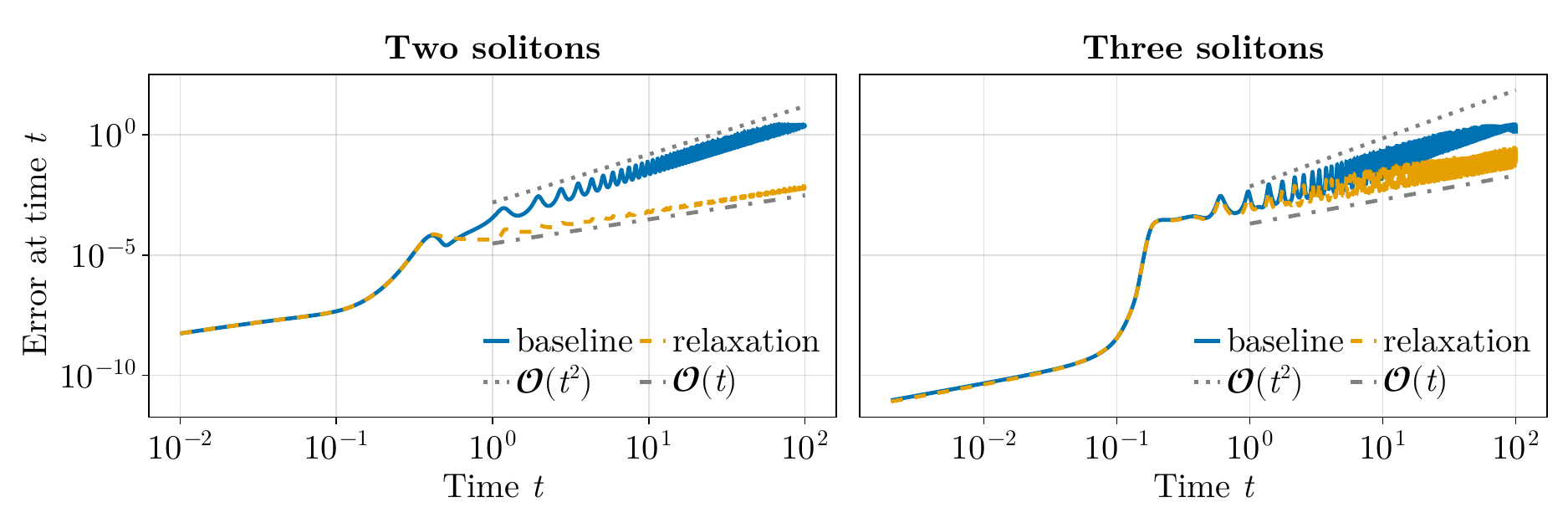}
  \caption{Error growth in time for the two- and three-soliton
           setups using Fourier collocation methods with $N = 2^{10}$
           nodes in the domain $[-35, 35]$ in space and the fifth-order
           method of \cite{kennedy2019higher} with time step size
           $\Delta t = 10^{-2}$ (two solitons) and
           $\Delta t = 2 \times 10^{-3}$ (three solitons).}
  \label{fig:error_growth}
\end{figure}

The results shown in Figure~\ref{fig:error_growth} confirm the advantages of using relaxation to conserve mass and energy.
In particular, the error grows quadratically in time for the baseline method and only linearly in time for the relaxed method.
Remarkably, we see this behavior not only for the two-soliton case (in agreement with \cite{alvarez2010error}) but
even with three solitons.

\subsection{Error growth for a gray soliton}

Next, we consider the defocusing case $\beta = -1$.
Following \cite{dhaouadi2019extended}, we consider the gray soliton
expressed in terms of the hydrodynamic variables
\begin{equation}
  \varrho = b_1 - \frac{b_1 - b_2}{\cosh^2\Bigl(\sqrt{b_1 - b_2}\bigl(x / \sqrt{2} - c t\bigr)\Bigr)}, \quad
  v = c - \frac{b_1 \sqrt{b_2}}{\varrho},
\end{equation}
obtained from the Madelung transformation
\begin{equation}
  u = \sqrt{\varrho} \exp(\i \theta),
  \qquad
  v = \theta_x.
\end{equation}
As in \cite[Section~5.1]{dhaouadi2019extended}, we use the parameters $b_1 = 1$, $b_2 = 1.5$, and $c = 2$.
The scaling factor $\sqrt{2}$ of $x$ is chosen to compensate for the different choice of constants in the NLS equation in \cite{dhaouadi2019extended}.

Since the gray soliton is given in terms of the hydrodynamic velocity $v$ and we use the classical variable $u$, we compute $\theta$ by integrating $v$ numerically using the adaptive Gauss-Kronrod quadrature implemented in QuadGK.jl \cite{johnson2013quadgk}.
We choose a periodic domain with left boundary $-30$ and compute the right boundary $\approx 33.9412$ such that the initial condition is periodic using the root finding algorithm of \cite{klement2014using} implemented in SimpleNonlinearSolve.jl \cite{pal2024nonlinearsolve}.

\begin{figure}[htb]
  \centering
  \includegraphics[width=\textwidth]{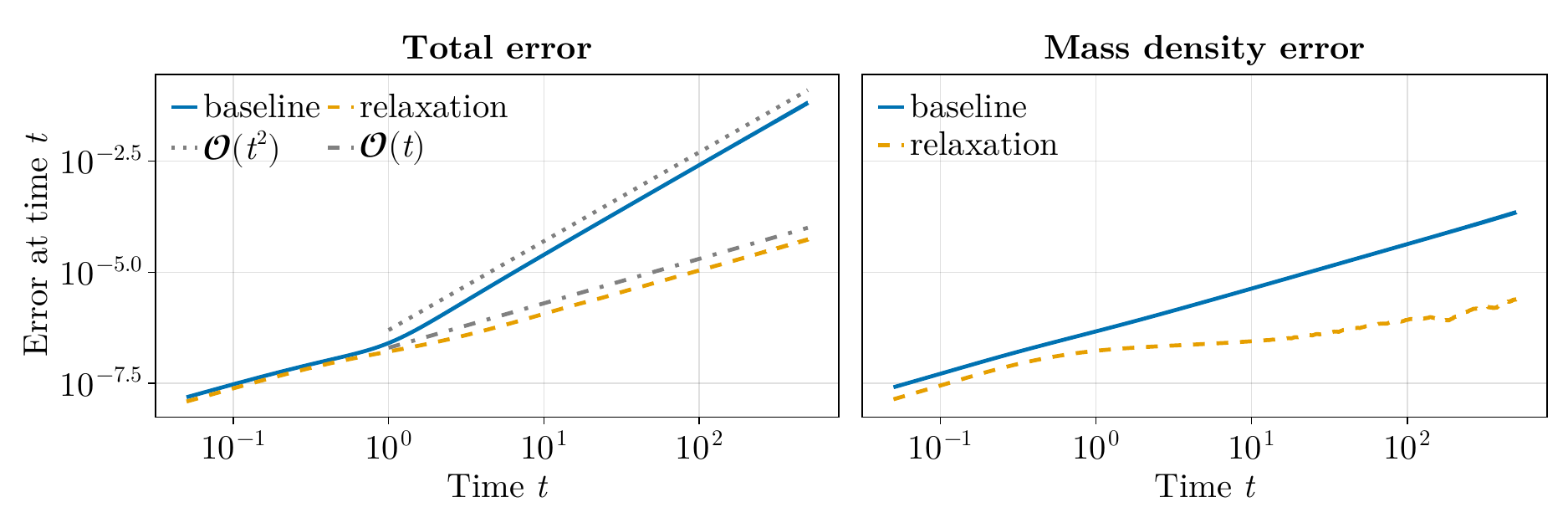}
  \caption{Error growth in time for the gray soliton
           setup using Fourier collocation methods with $N = 2^{8}$
           nodes in space and the fifth-order
           method of \cite{kennedy2019higher} with time step size
           $\Delta t = 0.05$.}
  \label{fig:error_growth_gray_soliton}
\end{figure}

The results shown in Figure~\ref{fig:error_growth_gray_soliton} confirm again the advantages of using relaxation to conserve mass and energy.
In particular, the total error grows again quadratically in time for the baseline method and only linearly in time for the relaxed method.
In contrast, the discrete $L^2$ error of the mass density $|u|^2 = v^2 + w^2$ grows only linearly in time, even without relaxation.

\section{Performance comparison}
\label{sec:performance_comparison}

Next, we compare the performance of our proposed approach to other methods
from the literature. As described in \cite[Section~5.2]{biswas2024accurate},
the best performing methods from their comparison are the linearly implicit
method of Besse et al.\ \cite{besse2004relaxation,besse2021energy} with uniform
time step sizes (limited to second-order accuracy in time) with arbitrarily
high-order finite differences in space and the multiple-relaxation method of
Biswas and Ketcheson \cite{biswas2024accurate} with time step size control
and a continuous finite element method in space (limited to second-order
accuracy in space). In their comparison, these two methods perform similarly.
Thus, we only compare our approach to the relaxation method of
Besse et al.\ \cite{besse2004relaxation,besse2021energy}.

We have implemented all methods with a reasonable amount of effort to ensure
a fair comparison. For example, we have re-used the symbolic factorization
of sparse matrices for the linearly implicit method of Besse et al.;
please note that the system matrix changes in every time step for this method,
so that we have to re-assemble and re-factor the system matrix quite often.
Because of these measures (and using Julia instead of Python), our
implementations are more efficient than the ones provided by
\cite{biswas2024accurate} (roughly by a factor between three and 13 for the
results reported in \cite[Table~6]{biswas2024accurate}).

For the IMEX methods with quadratic-preserving relaxation, the system matrices do not
depend on the numerical solution but only on the time step size; we store
and re-use the sparse LU factorizations of these matrices.
We only consider Fourier methods in space for IMEX methods, where we can solve
the resulting linear systems efficiently using the fast Fourier transform.
We do not use Fourier methods for the scheme of Besse et al., since they already
noted that their method is not competitive in this case
\cite[p.~642]{besse2021energy} due to the structure of the linear systems.

\begin{figure}[htb]
  \centering
  \includegraphics[width=\textwidth]{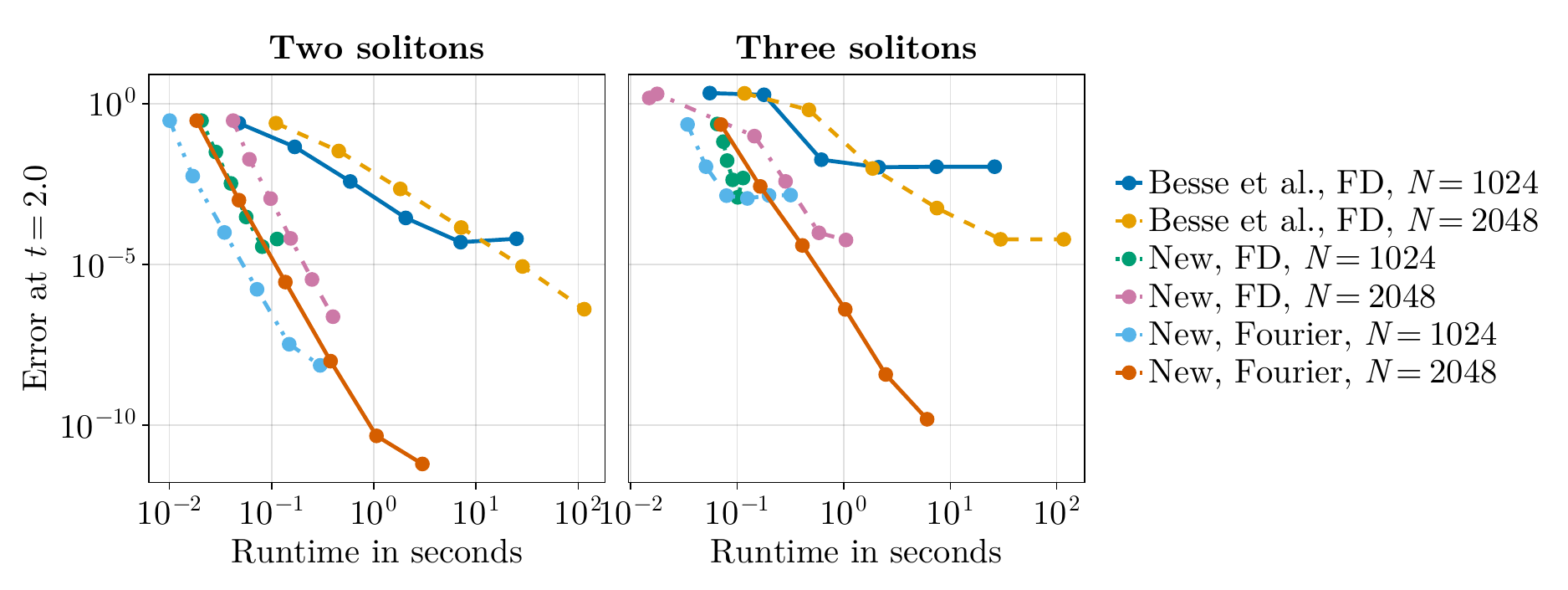}
  \caption{Error versus runtime for the two- and three-soliton setups using
           finite differences and Fourier collocation methods in the domain
           $[-35, 35]$. For our approach, we use the fifth-order method of
           \cite{kennedy2019higher} with quadratic-preserving relaxation to conserve mass
           and energy. For all methods, we used various uniform time step sizes.}
  \label{fig:performance_comparison}
\end{figure}

Following \cite{biswas2024accurate}, we consider the two- and three-soliton
setups in the spatial domain $[-35, 35]$ and compute the discrete $L^2$ error
at the final time $t = 2$. To avoid limiting the error artificially by the
spatial discretization, we use eighth-order FDs and Fourier
collocation schemes. We solve the problem three times
on an Apple M4 MacBook Pro and take the minimum runtime to reduce the influence
of noise.

The results visualized in Figure~\ref{fig:performance_comparison} demonstrate
the advantage of our proposed methods combining arbitrarily high-order SBP
operators in space with quadratic-preserving relaxation applied to high-order IMEX methods
in time. For the same runtime, our approach achieves a significantly smaller
error than the reference method. Moreover, the Fourier collocation method
is more efficient than the FD method using sparse direct solvers.

Finally, we have run a rough comparison of our methods with those of \cite{bai2024high}.
We consider the one-soliton problem from Section~4 of that work, given by
\begin{align*}
    u(x,t) & = \operatorname{sech}(x+4t) \exp(-i(2x+3t)).
\end{align*}
We solve the problem on the domain $x\in[-40,40]$ for $0\le t \le 1$ and measure
the $L^2$ norm of the error at the final time.  Using the
same discretization considered in Figure 4.4(b) of that work the wall clock time
for a run using $\Delta t = 1/512$ and 1024 points in space is approximately 90
seconds and results in an error of $1.26\times 10^{-6}$.  Meanwhile, using the
method described here with the same number of time steps and spatial points, the
wall clock time is less than 1/10 of one second and results in an error of
$9.60\times 10^{-12}$.  We observe that the method here yields a much
more accurate solution in a time that is approximately 4 orders of magnitude faster.
The tests for both methods were performed on a workstation with two sockets of
20 dual-threaded Intel Xeon Gold 6230 CPUs running Ubuntu 22.04.

\section{Hyperbolization of the nonlinear Schr{\"o}dinger equation}
\label{sec:hyperbolization}

Next, we consider the hyperbolization
\begin{equation}
\label{eq:nlsh-q}
\begin{aligned}
  \i q^0_t + q^1_x &= -\beta |q^0|^2 q^0, \\
  \i \tau q^1_t - q^0_x &= -q^1
\end{aligned}
\end{equation}
of the nonlinear Schr{\"o}dinger equation \eqref{eq:nls-u} analyzed in
\cite{biswas2025hyperbolic}. Separating the real and imaginary parts
as $q^0 = v + \i w$ and $q^1 = \nu + \i \omega$, we obtain
\begin{equation}
\label{eq:nlsh-vw}
\begin{aligned}
  v_t &= -\omega_x - \beta \bigl( v^2 + w^2 \bigr) w, \\
  w_t &= \nu_x + \beta \bigl( v^2 + w^2 \bigr) v, \\
  \tau \nu_t &= w_x - \omega, \\
  \tau \omega_t &= -v_x + \nu.
\end{aligned}
\end{equation}
This system has three nonlinear invariants approximating the mass,
momentum, and energy of the NLS
\eqref{eq:nls-u} (formally) for $\tau \to 0$ \cite{biswas2025hyperbolic}.
Mass-conserving semidiscretizations using periodic first-derivative SBP
operators have been proposed in \cite[Section~4]{biswas2025hyperbolic}\footnote{They required skew-Hermitian derivative operators in periodic domains, i.e., SBP operators with mass matrices proportional to the identity matrix such as classical central finite differences or Fourier collocation methods.}.
Here, we generalize this approach to upwind SBP operators, avoiding issues with wide-stencil second-derivative operators in the FD framework.
From the point of view of DG methods, this means we use LDG methods with alternating upwind fluxes instead of the Bassi-Rebay~1 (BR1) scheme.
Concretely, we propose the semidiscretization
\begin{equation}
\label{eq:semidiscretization-nlsh}
\begin{aligned}
  \partial_t \vec{v} &= -D_- \vec{\omega} - \beta \bigl( \vec{v}^2 + \vec{w}^2 \bigr) \vec{w}, \\
  \partial_t \vec{w} &= D_- \vec{\nu} + \beta \bigl( \vec{v}^2 + \vec{w}^2 \bigr) \vec{v}, \\
  \tau \partial_t \vec{\nu} &= D_+ \vec{w} - \vec{\omega}, \\
  \tau \partial_t \vec{\omega} &= -D_+ \vec{v} + \vec{\nu},
\end{aligned}
\end{equation}
where $D_\pm$ are periodic upwind operators. The corresponding discrete
total mass is
\begin{equation}
\label{eq:mass-nlsh}
\begin{aligned}
  \mass
  &=
  \vec{v}^T M \vec{v} + \vec{w}^T M \vec{w}
  + \tau \vec{\nu}^T M \vec{\nu} + \tau \vec{\omega}^T M \vec{\omega}
  \\
  &\approx
  \int \left( v^2 + w^2 + \tau \nu^2 + \tau \omega^2 \right) \dif x
  =
  \int \left( |q^0|^2 + \tau |q^1|^2 \right) \dif x,
\end{aligned}
\end{equation}
and the discrete total energy is
\begin{equation}
\label{eq:energy-nlsh}
\begin{aligned}
  \energy
  &=
  2 \vec{\nu}^T M D_+ \vec{v} - \vec{\nu}^T M \vec{\nu}
  + 2 \vec{\omega}^T M D_+ \vec{w} - \vec{\omega}^T M \vec{\omega}
  - \frac{\beta}{2} \vec{1}^T M \bigl( \vec{v}^2 + \vec{w}^2 \bigr)^2
  \\
  &\approx
  \int \left( 2 \nu v_x - \nu^2 + 2 \omega w_x - \omega^2 - \frac{\beta}{2} (v^2 + w^2)^2 \right) \dif x
  \\
  &=
  \int \left( \overline{q^1} q^0_x + q^1 \overline{q^0_x} - |q^1|^2 - \frac{\beta}{2} |q^0|^4 \right) \dif x.
\end{aligned}
\end{equation}
As $\tau \to 0$, we (formally) have $\vec{\nu} \to D_+ \vec{v}$
and $\vec{\omega} \to D_+ \vec{w}$ in \eqref{eq:semidiscretization-nlsh},
i.e., the semidiscretization \eqref{eq:semidiscretization-nlsh}
converges to the semidiscretization \eqref{eq:semidiscretization-nls}
of the nonlinear Schr{\"o}dinger equation \eqref{eq:nls-u}
with second-derivative SBP operator $D_2 = D_- D_+$.

\begin{theorem}
\label{thm:semidiscrete_conservation_nlsh}
  The semidiscretization \eqref{eq:semidiscretization-nlsh}
  conserves the discrete total mass \eqref{eq:mass-nlsh}
  and the discrete total energy \eqref{eq:energy-nlsh}
  for diagonal-norm upwind SBP operators.
\end{theorem}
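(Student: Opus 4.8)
The plan is to mirror the proof of Theorem~\ref{thm:semidiscrete_conservation_nls}: differentiate each invariant along solutions of \eqref{eq:semidiscretization-nlsh}, substitute the right-hand sides, and collect terms so that everything cancels by means of two structural facts. First, for periodic upwind SBP operators the defining relation reduces to $M D_+ + D_-^T M = \tR \tR^T - \tL \tL^T = 0$, equivalently $M D_+ = -D_-^T M$ and, after transposing and using $M = M^T$, also $D_+^T M = -M D_-$. Second, the norm matrix $M$ is diagonal, hence symmetric, and commutes with the componentwise (Hadamard) multiplications in the nonlinearity. I will \emph{not} need the negative-semidefiniteness of $M(D_+ - D_-)$; that property governs dissipation and stability, whereas here we only establish exact conservation.

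For the mass \eqref{eq:mass-nlsh} I would compute
\[
  \partial_t \mass = 2\vec{v}^T M \partial_t\vec{v} + 2\vec{w}^T M \partial_t\vec{w} + 2\tau\vec{\nu}^T M \partial_t\vec{\nu} + 2\tau\vec{\omega}^T M \partial_t\vec{\omega}
\]
and insert \eqref{eq:semidiscretization-nlsh}. The two cubic terms $-2\beta\vec{v}^T M (\vec{v}^2+\vec{w}^2)\vec{w}$ and $+2\beta\vec{w}^T M (\vec{v}^2+\vec{w}^2)\vec{v}$ coincide because $M$ is diagonal, so they cancel; the undifferentiated coupling $-2\vec{\nu}^T M\vec{\omega} + 2\vec{\omega}^T M\vec{\nu}$ cancels by symmetry of $M$. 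The remaining four first-derivative terms cancel in two pairs: transposing the scalar $2\vec{\nu}^T M D_+\vec{w}$ and using $D_+^T M = -M D_-$ turns it into $-2\vec{w}^T M D_-\vec{\nu}$, which annihilates $+2\vec{w}^T M D_-\vec{\nu}$, and similarly $-2\vec{\omega}^T M D_+\vec{v}$ cancels $-2\vec{v}^T M D_-\vec{\omega}$.

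The energy \eqref{eq:energy-nlsh} is the substantive part. Differentiating $\energy = 2\vec{\nu}^T M D_+\vec{v} - \vec{\nu}^T M\vec{\nu} + 2\vec{\omega}^T M D_+\vec{w} - \vec{\omega}^T M\vec{\omega} - \tfrac{\beta}{2}\vec{1}^T M (\vec{v}^2+\vec{w}^2)^2$, substituting, and sorting the resulting terms into four groups, I expect: (i) all contributions carrying a factor $1/\tau$ (from $\partial_t\vec{\nu}$ and $\partial_t\vec{\omega}$) cancel among themselves, the pure second-order pieces $(D_+\vec{w})^T M D_+\vec{v}$ and $(D_+\vec{v})^T M D_+\vec{w}$ being equal scalars and the mixed $\vec{\nu},\vec{\omega},\vec{v},\vec{w}$ pieces cancelling in pairs, each either identically or by $M^T = M$; (ii) the two quintic $\beta^2$ terms cancel because $M$ is diagonal; (iii) the $\beta$ cross terms pair the nonlinearity against $M D_+$ (from the differentiated cross energy) and against $D_-^T M$ (from the $D_-$ part of $\partial_t\vec{v},\partial_t\vec{w}$ fed into the cubic term), yielding $-2\beta\,\vec{\nu}^T (M D_+ + D_-^T M)(\vec{v}^2+\vec{w}^2)\vec{w}$ and its $\vec{\omega}$-analogue, which vanish by the periodic SBP relation; (iv) the surviving purely linear terms $-2\vec{\nu}^T M D_+ D_-\vec{\omega} + 2\vec{\omega}^T M D_+ D_-\vec{\nu}$ cancel because $M D_+ D_- = -D_-^T M D_-$ is symmetric, so $\vec{\nu}^T M D_+ D_-\vec{\omega} = \vec{\omega}^T M D_+ D_-\vec{\nu}$.

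The main obstacle is purely organizational: the energy derivative expands into roughly a dozen terms, and the result hinges on recognizing the right grouping so that the combination $M D_+ + D_-^T M$ appears in the $\beta$ cross terms and the symmetric product $M D_+ D_- = -D_-^T M D_-$ appears in the linear terms. Once the terms are sorted by their $\tau$, $\beta$, and $\beta^2$ weights, each cancellation follows from exactly one of the two structural facts above, and no further properties of the operators are required.
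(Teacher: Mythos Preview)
Your proposal is correct and follows essentially the same approach as the paper: differentiate, substitute \eqref{eq:semidiscretization-nlsh}, and cancel using the periodic upwind SBP identity $MD_+ + D_-^TM = 0$ together with the diagonality of $M$. Your explicit grouping by $\tau^{-1}$, $\beta$, and $\beta^2$ weights---and in particular your observation that the surviving linear terms cancel because $MD_+D_- = -D_-^TMD_-$ is symmetric---makes the organization slightly cleaner than the paper's raw expansion, but the argument is the same.
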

\begin{proof}
  We compute
  \begin{equation}
  \begin{aligned}
    \frac{1}{2} \partial_t \mass
    &=
    \vec{v}^T M \partial_t \vec{v} + \vec{w}^T M \partial_t \vec{w}
    + \tau \vec{\nu}^T M \partial_t \vec{\nu} + \tau \vec{\omega}^T M \partial_t \vec{\omega}
    \\
    &=
    - \vec{v}^T M D_- \vec{\omega}
    - \beta \vec{v}^T M \bigl( \vec{v}^2 + \vec{w}^2 \bigr) \vec{w}
    + \vec{w}^T M D_- \vec{\nu}
    + \beta \vec{w}^T M \bigl( \vec{v}^2 + \vec{w}^2 \bigr) \vec{v}
    \\
    &\quad
    + \vec{\nu}^T M D_+ \vec{w} - \vec{\nu}^T M \vec{\omega}
    - \vec{\omega}^T M D_+ \vec{v} + \vec{\omega}^T M \vec{\nu}
    \\
    &=
    - \vec{v}^T M D_- \vec{\omega}
    - \vec{\omega}^T M D_+ \vec{v}
    + \vec{w}^T M D_- \vec{\nu}
    + \vec{\nu}^T M D_+ \vec{w}
    = 0,
  \end{aligned}
  \end{equation}
  where we used the periodic upwind SBP property and that the mass matrix $M$ is diagonal.
  Next, we compute
  \begin{equation}
  \begin{aligned}
    &\quad
    \frac{1}{2} \partial_t \energy
    \\
    &=
    \vec{\nu}^T M D_+ \partial_t \vec{v}
    + \vec{v}^T D_+^T M \partial_t \vec{\nu}
    - \vec{\nu}^T M \partial_t \vec{\nu}
    + \vec{\omega}^T M D_+ \partial_t \vec{w}
    + \vec{w}^T D_+^T M \partial_t \vec{\omega}
    - \vec{\omega}^T M \partial_t \vec{\omega}
    \\
    &\quad
    - \beta \vec{1}^T M \bigl( \vec{v}^2 + \vec{w}^2 \bigr) \vec{v} \partial_t \vec{v}
    - \beta \vec{1}^T M \bigl( \vec{v}^2 + \vec{w}^2 \bigr) \vec{w} \partial_t \vec{w}
    \\
    &=
    - \vec{\nu}^T M D_+ D_- \vec{\omega}
    - \beta \vec{\nu}^T M D_+ \bigl( \vec{v}^2 + \vec{w}^2 \bigr) \vec{w}
    + \tau^{-1} \vec{v}^T D_+^T M D_+ \vec{w}
    - \tau^{-1} \vec{v}^T D_+^T M \vec{\omega}
    \\
    &\quad
    - \tau^{-1} \vec{\nu}^T M D_+ \vec{w}
    + \tau^{-1} \vec{\nu}^T M \vec{\omega}
    - \vec{\omega}^T M D_+ D_- \vec{v}
    + \beta \vec{\omega}^T M D_+ \bigl( \vec{v}^2 + \vec{w}^2 \bigr) \vec{v}
    \\
    &\quad
    - \tau^{-1} \vec{w}^T D_+^T M D_+ \vec{v}
    + \tau^{-1} \vec{w}^T D_+^T M \vec{\nu}
    + \tau^{-1} \vec{\omega}^T M D_+ \vec{v}
    - \tau^{-1} \vec{\omega}^T M \vec{\nu}
    \\
    &\quad
    + \beta \vec{1}^T M \bigl( \vec{v}^2 + \vec{w}^2 \bigr) \vec{v} D_- \vec{\omega}
    + \beta^2 \vec{1}^T M \bigl( \vec{v}^2 + \vec{w}^2 \bigr)^2 \vec{v} \vec{w}
    \\
    &\quad
    - \beta \vec{1}^T M \bigl( \vec{v}^2 + \vec{w}^2 \bigr) \vec{w} D_- \vec{\nu}
    - \beta^2 \vec{1}^T M \bigl( \vec{v}^2 + \vec{w}^2 \bigr)^2 \vec{v} \vec{w}
    \\
    &=
    - \vec{\nu}^T M D_+ D_- \vec{\omega}
    - \beta \vec{\nu}^T M D_+ \bigl( \vec{v}^2 + \vec{w}^2 \bigr) \vec{w}
    - \vec{\omega}^T M D_+ D_- \vec{v}
    + \beta \vec{\omega}^T M D_+ \bigl( \vec{v}^2 + \vec{w}^2 \bigr) \vec{v}
    \\
    &\quad
    + \beta \vec{1}^T M \bigl( \vec{v}^2 + \vec{w}^2 \bigr) \vec{v} D_- \vec{\omega}
    - \beta \vec{1}^T M \bigl( \vec{v}^2 + \vec{w}^2 \bigr) \vec{w} D_- \vec{\nu}
    =
    0,
  \end{aligned}
  \end{equation}
  where we canceled terms in the second step and used the periodic upwind SBP property as well as the fact that the mass matrix $M$ is diagonal in the last step.
\end{proof}

The semidiscrete hyperbolization \eqref{eq:semidiscretization-nlsh}
cannot recover all semidiscretizations discussed above for the standard
nonlinear Schr{\"o}dinger equation \eqref{eq:nls-u} in the limit $\tau \to 0$.
In particular, it can only recover second-derivative operators that
can be factored as product of first-derivative upwind operators.
This excludes, for example, Fourier collocation methods with an even
number of nodes (unless the second-derivative operator uses the ``wrong''
treatment of the highest frequency mode \cite{johnson2011notes})
or narrow-stencil FD operators with order of accuracy four or higher.
However, LDG methods with alternating upwind fluxes or second-derivative
FD operators using a product of two alternating upwind FD operators
can be recovered in the limit $\tau \to 0$.

\subsection{Mass projection}

Since the mass \eqref{eq:energy-nlsh} of the hyperbolized NLS
scales the approximation variables $\nu, \omega$
by $\tau$, we have to consider a projection onto an ellipsoid instead
of a sphere. Since the exact orthogonal projection does not have a
simple closed-form solution\footnote{We have to find a root of a quartic
polynomial, for which no simple closed-form solution exists.},
we use a simplified projection algorithm as in
\cite[Section~IV.4]{hairer2006geometric} instead: we project along the
direction orthogonal to the constant-mass ellipsoid based on the current solution.
Given values $\vec{v}$, $\vec{w}$, $\vec{\nu}$, and $\vec{\omega}$,
the simplified projection scales $\vec{v}$, $\vec{w}$ by $\alpha_1$
and $\vec{\nu}$, $\vec{\omega}$ by $\alpha_2$, where
\begin{equation}
\begin{aligned}
  \alpha_1
  &=
  \frac{p (\tau - 1) \tau^2 + \sqrt{-p q (\tau - 1)^2 \tau + c (q + p \tau^3)}}{q + p \tau^3},
  \\
  \alpha_2
  &=
  \frac{q (1 - \tau) + \tau \sqrt{-p^2 q^2 (\tau - 1)^2 \tau + c (q^2 + p^2 \tau^3)}}{q^2 + p^2 \tau^3},
\end{aligned}
\end{equation}
$c$ is the desired value of the mass \eqref{eq:energy-nlsh}, and
\begin{equation}
  q = \| \vec{v} \|_M^2 + \| \vec{w} \|_M^2,
  \qquad
  p = \| \vec{\nu} \|_M^2 + \| \vec{\omega} \|_M^2.
\end{equation}

\subsection{Numerical results}

The semidiscretization \eqref{eq:semidiscretization-nlsh} of the
hyperbolized NLS behaves as expected.
The asymptotic-preserving properties can be analyzed as demonstrated
in \cite{biswas2025hyperbolic}. Here, we only present selected numerical
results demonstrating fully-discrete conservation of the total mass
\eqref{eq:mass-nlsh} and the total energy \eqref{eq:energy-nlsh}.

\begin{figure}[htb]
  \centering
  \includegraphics[width=\textwidth]{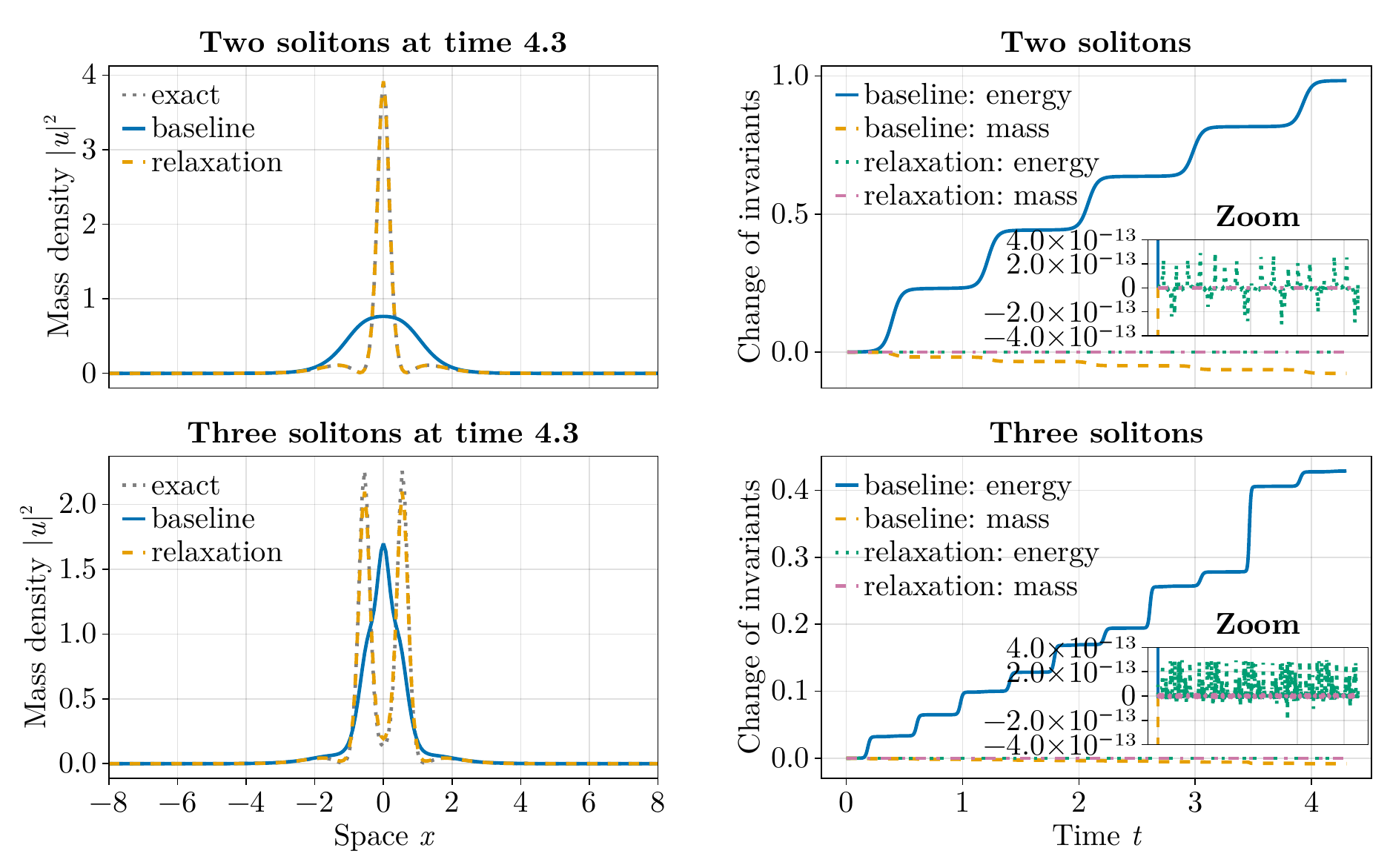}
  \caption{Numerical results obtained for the two- ($\Delta t = 10^{-2}$)
           and three-soliton ($\Delta t = 10^{-3}$)
           setups for the hyperbolized nonlinear Schr{\"o}dinger equation
           with $\tau = 10^{-4}$ using sixth-order upwind FD SBP operators
           with $N = 2^{10}$ nodes in the domain $[-35, 35]$ in space and
           the third-order method of \cite{ascher1997implicit} in time.}
  \label{fig:fully_discrete_conservation_hyperbolization}
\end{figure}

We discretize the domain $[-35, 35]$ with sixth-order upwind FD SBP
operators with $N = 2^{10}$ nodes. We use well-prepared initial data for
the hyperbolization, i.e., we use the initial values
$\vec{\nu}^0 = D_+ \vec{v}^0$ and $\vec{\omega}^0 = D_+ \vec{w}^0$.
Results with and without relaxation for are shown in
Figure~\ref{fig:fully_discrete_conservation_hyperbolization}.
As expected, the results are similar to the ones obtained with
the baseline nonlinear Schr{\"o}dinger equation, see
Figure~\ref{fig:fully_discrete_conservation}.
We clearly observe a significant improvement of the numerical solutions
by conserving the total mass and energy discretely using quadratic-preserving relaxation.

\section{Summary and discussion}
\label{sec:summary}

In this work we have shed new light on the conservation properties of existing
schemes in addition to proposing new efficient conservative schemes for the NLS equation
and its hyperbolic approximation.  Our analysis makes it clear that using
Fourier spectral differentiation in space yields a semidiscrete scheme that conserves
discrete analogs of both mass and energy, as do all methods that use a broad class of
SBP operators.

Our new quadratic-preserving relaxation method extends these conservation
properties to the fully-discrete setting, in a manner that we have shown to be
more robust and computationally efficient compared to multiple relaxation.
As our new approach is focused on the time discretization,
it can immediately be brought to bear on multidimensional applications of NLS.
The quadratic-preserving relaxation technique can also be applied to other systems
that possess both quadratic and non-quadratic nonlinear conserved functionals.
Furthermore, it suggests a natural extension in order to conserve more than
two conserved quantities.  The analysis and application of these generalizations
is the subject of ongoing work.

\appendix

\section*{Acknowledgments}

HR was supported by the Deutsche Forschungsgemeinschaft
(DFG, German Research Foundation, project numbers 513301895 and 528753982
as well as within the DFG priority program SPP~2410 with project number 526031774).
DK was supported by funding from King Abdullah University of Science and Technology.

We thank Ángel Durán for hosting us in Valladolid during the week September 15--19 2025,
where this project was initiated.

\printbibliography

\end{document}